\documentclass{mcom-l}
\newcommand{\Poly}{\mathbb{P}}
\newcommand{\bR}{{\mathbb R}}
\newcommand{\W}{{\mathcal W}}
\newcommand{\B}{{\mathcal B}}
\newcommand{\R}{{\mathbb R}}
\newcommand{\N}{{\mathbb N}}
\newcounter{bean}
\newenvironment{romenum}{\begin{list}{{(\roman{bean})}}
{\usecounter{bean}}}{\end{list}}
\copyrightinfo{2006}{American Mathematical Society}
\newtheorem{theorem}{Theorem}[section]
\newtheorem{lemma}[theorem]{Lemma}

\theoremstyle{definition}

\theoremstyle{remark}
\newtheorem{remark}[theorem]{Remark}

\numberwithin{equation}{section}
\begin{document}

\title[DGM FOR VOLETRRA INTEGRO-DIFFERENTIAL EQUATIONS]
{A Superconvergent  discontinuous Galerkin method for Volterra integro-differential equations, smooth and non-smooth kernels}

\author{Kassem Mustapha}
\address{Department of Mathematics and Statistics,
King Fahd University of Petroleum and Minerals, Dhahran, 31261, Saudi Arabia.} \email{kassem@kfupm.edu.sa}
\thanks{Support of the KFUPM through the project SB101020 is gratefully acknowledged.}
\maketitle
\begin{abstract}
We study the numerical solution for Volerra integro-differential equations with smooth and non-smooth kernels. We use a $h$-version
discontinuous Galerkin (DG) method and derive nodal error bounds that are explicit in the parameters of interest. In the case of non-smooth
kernel, it is justified that the start-up singularities can be resolved at superconvergence rates by using non-uniformly graded meshes. Our
theoretical results are numerically validated in a sample of test problems.
\end{abstract}

\keywords { Integro-differential  equation, weakly singular kernel, smooth kernel,  DG time-stepping, error analysis, variable time steps }

\section{Introduction}
 In this paper, we study the
discontinuous Galerkin (DG) for a nonlocal time dependent Volterra integro-differential equation of the form
\begin{equation}\label{eq: VIE}
u'(t) +a(t)u(t) +\B u(t) = f(t),~~ 0<t<T~~{\rm with}~~u(0)=u_0,
\end{equation} where  $\B$ is the
Volterra operator:
\begin{equation}
\label{eq:kernel} \B u(t)=\int_0^t \beta(t,s) u(s)\,ds,
\end{equation}
such that,
\begin{equation}\label{eq: VIE1}
\beta(t,s)= (t-s)^{\alpha-1} b(s)~~ \text{ ~~for~all~ $0<s<t\le T$}
\end{equation}
with either  $\alpha \in (0,1)$ (weakly singular kernel) or $\alpha \in   \N_0:=\{1,2,3,\cdots\}$ (smooth kernel). Here $a$, $b$ and $f$ are
continuous real valued functions on $[0,T]$. We assume that there exist $\mu_*
>0$ such that $ a(t) \ge \mu_*$ for all $t\in [0,T]$. As a consequence of this and the continuity assumptions on the functions $a$ and $b$\,;
there exist $\mu_*\,,\mu^* >0$ such that \begin{equation}\label{eq: mu*} \mu_*\le a(t) \le \mu^*\quad{\rm and}\quad |b(t)|\le \mu^*\quad {\rm
for~all}~~~t\in [0,T]\,.\end{equation}

For any $u_0 \in R$, problem (\ref{eq: VIE}) has a unique solution $u$ which is continuously differentiable, see for example \cite{Brunner04}.
However for $\alpha \in (0,1)$, even if the functions $a$, $b$ and $f$ in (\ref{eq: VIE})--\eqref{eq: VIE1} are smooth, the second derivative of
$u$ is not bounded at $t=0$ (see \cite{BrunnerPedasVainikko01} and related references therein), and behaves like $|u''(t)|\le Ct^{ \alpha-1}$.
The singular behavior of $u$ near $t=0$ may lead to suboptimal convergence rates if we work with quasi-uniform time meshes. To overcome this
problem, we employ a family of non-uniform meshes, where the time-steps are concentrated near $t=0$.

Various numerical methods had been studied for problem \eqref{eq: VIE}. For instance, collocation methods for (\ref{eq: VIE}) with a weakly
singular kernel  were investigated by many authors where an $O(k^{p+1})$ ($k$ is the maximum time-step size and $p$ is the degree of the
approximate solution) global convergence rate had been achieved using a non-uniform graded mesh of the form (\ref{eq: standard tn}), see for
example \cite{Brunner04,BrunnerPedasVainikko01,Tang1993} and references therein.  Spectral methods and the corresponding error analysis were
provided in \cite{16, 28}  assuming that $\alpha=1$ and the solution $u$ of (\ref{eq: VIE}) is smooth. However, for $0<\alpha<1$ (that is, the
kernel is weakly singular), the spectral collocation method were recently studied in \cite{29} where the convergence analysis was carried out
assuming again  that the solution $u$  is smooth. For other numerical tools, refer to \cite{29} and references therein.

In the present paper we shall study the nodal error analysis for the DG time-stepping method  (with a fixed approximation order) applied to
problem (\ref{eq: VIE}). Indeed, the DG time-stepping method for \eqref{eq: VIE} when $\alpha \in (0,1)$ has been introduced
in~\cite{BrunnerSchoetzau06}, where a uniform optimal $O(k^{p+1})$  convergence rate had been shown assuming that $u$ is sufficiently regular.
In this work, we show that a faster convergence than~$O(k^{p+1})$ is possible at the nodal points. For a weakly singular kernel ($\alpha \in
(0,1)$), we prove that by using non-uniformly refined time-steps, start-up singularities near $t=0$ can be resolved at
$O(k^{\min\{p,\alpha+1\}+p+1})$ superconvergence rates\,. Such convergence rates can not be obtained by using the approach given in
\cite{BrunnerSchoetzau06}. Very briefly, our proof technique will be carried out in two steps; deriving first the global convergence results of
the DG method for the dual problem of (\ref{eq: VIE}) (which is essential for the nodal error but irrelevant for the global error estimates),
see Theorem \ref{v2thm: U-Pi u}. Then, we use these results with the orthogonal property of the DG scheme for (\ref{eq: VIE}) very appropriately
(see (\ref{v2eq: duality trick}) and Theorem \ref{thm: U^n-u(t_n)}) to achieve nodal superconvergence estimates. For smooth kernels ($\alpha\in
\N_0$), we appropriately modify our earlier analyses to show nodal superconvergence rates  of order $O(k^{2p+1})$ assuming that the functions
 $a$, $b$ and  $f$ are sufficiently regular (see Theorem \ref{thm: U^n-u(t_n) smooth}).

The origins of the DG methods can be traced back to the seventies where they had been proposed as variational methods for numerically solving
initial-value problems and transport problems~\cite{LesaintRaviart74,ReedHill73, DelfourHagerTrochu81,Estep95,Johnson88} and the references
therein. In the eighties, DG time-stepping methods were successfully applied to parabolic problems; see for example,
\cite{ErikssonJohnsonThomee1985}, where a nodal $O(k^{2p+1})$ superconvergence rate had been proved.  Subsequently, in
\cite{LarssonThomeeWahlbin98}, a piecewise linear time-stepping DG method had been proposed and studied for a parabolic integro-differential
equation:
\begin{equation}\label{eq: VIE parabolic}
u_t +A u +\B  \widetilde A u = f ~~~ {\rm in}~~(0,T]\times\Omega~~{\rm with}~~u(0)=v(x)~~{\rm on}~~\Omega~~{\rm for}~\alpha\in (0,1),
\end{equation}
where $\Omega \subset \bR^d$ is a bounded convex domain, $A$ is a linear self-adjoint, positive-definite operator (spatial), with compact
inverse, defined in $D(A)$, and where $A$ dominates the spatial operator $\widetilde A$. A nodal $O(k^3)$ superconvergence rate had been derived
 assuming that $b(s)=1$ in (\ref{eq: VIE1}), where  the error analysis there was based on the fact that on each time
interval, the DG solution takes its maximum values on one of the end points. However, this is not true in the case of DG methods of higher order
p. The high order time-stepping DG for (\ref{eq: VIE parabolic}) was investigated in \cite{MustaphaBrunnerMustaphaSchoetzau} where a global
optimal $O(k^{p+1})$ convergence rate had been proved, assuming that the mesh is non-uniformly graded. (For other numerical methods for
\eqref{eq: VIE parabolic}, see \cite{Mustapha2008, Mustapha2010, Pani2010, Pani2011} and related references therein.)  Indeed, our convergence
analysis can in principle be extended to cover the nodal error estimates from the DG time-stepping method of order $p$, applied to (\ref{eq: VIE
parabolic}).

The outline of the paper is as follows. In Section~\ref{sec:numerical-method}, we introduce the DG time-stepping method with a fixed
approximation degree $p$ (typically low) on non-uniformly refined time-steps with $p\ge 1$. In Section~\ref{sec:time-error}, we give a global
formulation of the DG scheme, introduce our projection operator,
 and also provide some technical lemmas. In Section~\ref{sec:dual}, we define the dual of the problem (\ref{eq: VIE}) and  then derive the
error estimates from the discretization by the DG method when $\alpha\in (0,1)$; see Theorem \ref{v2thm: U-Pi u}. In
Section~\ref{sec:nodal-error}, we prove our main nodal error bounds. For $\alpha \in (0,1)$, an error $|U_-^n-u(t_n)|$ of order
$O(k^{\min\{p,\alpha+1\}+p+1})$ (i.e., superconvergent of order $k^3$ for $p=1$ and $k^{p+2+\alpha}$ for $p\ge 2$) has been shown provided that
the solution $u$ of (\ref{eq: VIE}) satisfies ~\eqref{eq:countable-regularity v1} and the mesh grading parameter $\gamma>(p+1)/\sigma$; see
Theorem \ref{thm: U^n-u(t_n)}. In Section \ref{sec:smooth kernel}, we consider the case $\alpha \in   \N_0$ (in (\ref{eq: VIE1})) and thus the
kernel is smooth. We show a nodal error of order $O(k^{2p+1})$  (over a uniform mesh) assuming that the solution $u$ of (\ref{eq: VIE}) is
sufficiently regular, refer to Theorem \ref{thm: U^n-u(t_n) smooth}.  We present a series of numerical examples to validate our theoretical
results in Section~\ref{sec:numerics}.
\section{Discontinuous Galerkin time-stepping}\label{sec:numerical-method}
To describe the DG  method, we introduce a (possibly non-uniform) partition of the time interval $[0,T]$ given by the points
\begin{equation}\label{eq: tn mesh}
0=t_0<t_1<\cdots<t_N=T.
\end{equation}
We set $I_n=(t_{n-1},t_n)$ and $k_n=t_n-t_{n-1}$ for $1\le n\le N$. The maximum step-size is defined as $k=\max_{1\le n\le N}k_n$. We now
introduce the discontinuous finite element space
\begin{equation}
\label{eq:FE-space} \W_p=\left\{\,v:J_N \to \R\,:\, v|_{I_n}\in\Poly_{p},\ 1\le n\le N\right\},
\end{equation}
where $J_N=\cup_{n=1}^N I_n$, and  $\Poly_{p}$ denotes the space of polynomials of degree $\le p$ where $p$ is a positive integer $\ge 1$. We
denote the left-hand limit, right-hand limit and jump at $t_n$ by $ v_-^n=v(t_n^-)$, $v_+^n=v(t_n^+)$ and $[v]^n=v^n_+-v_-^n$, respectively.

The DG approximation $U\in \W_p$  is now obtained as follows: Given $U(t)$ for $t\in I_j$ with $1\le j\le n-1$, the approximation $U\in
\Poly_{p}$ on the next time-step~$I_n$ is determined by requesting that
\begin{equation}
\label{eq: DG step}
\begin{split}
& U^{n-1}_+X^{n-1}_+
    +\int_{t_{n-1}}^{t_n}
    \Bigl[U'+a(t)U(t)+\B U(t)\Bigr]X\,dt
=U_-^{n-1}X^{n-1}_++\int_{t_{n-1}}^{t_n}f\,X\,dt
\end{split}
\end{equation}
for all test functions $X\in\Poly_{p}$.  This time-stepping procedure starts from  $U^0_-=u_0$, and after $N$~steps it yields the approximate
solution~$U\in \W_p$ for $t \in J_N$.
\begin{remark}\label{remark: piecewise onstant case}
For the piecewise-constant case~$p=0$, since $U'(t)=0$ and $U(t)=U_-^n=U^{n-1}_+=:{\bf U}^n$ for~$t\in I_n$, the DG method~\eqref{eq: DG step}
amounts to a generalized backward-Euler scheme
\begin{multline*}
\frac{{\bf U}^n-{\bf U}^{n-1}}{k_n}+
    U^n\frac{1}{k_n}\int_{t_{n-1}}^{t_n}a(t)\,dt +\omega_{nn}k_n {\bf U}^n\\
    =\frac{1}{k_n}\int_{t_{n-1}}^{t_n}f(t)\,dt-\frac{1}{k_n}\int_{t_{n-1}}^{t_n}\sum_{j=1}^{n-1}{\bf U}^j\int_{t_{j-1}}^{\min(t,t_j)}
    (t-s)^{\alpha-1}b(s)\,ds\,dt\,.
\end{multline*}
 In this case, the nodal and global errors have the same rate of  convergence which is $O(k)$, see \cite[Theorem 3.8]{BrunnerSchoetzau06}.
 \end{remark}

For our error analysis, it will be convenient to reformulate the DG scheme (\ref{eq: DG step}) in terms of the global bilinear form
\begin{equation}\label{eq: GN def}
\begin{split}
G_N(U,X)&=U^0_+\,X^0_++\sum_{n=1}^{N-1}[U]^n\,X^n_+
    +\sum_{n=1}^N\int_{t_{n-1}}^{t_n} \Bigl[U'(t)+a(t)U(t)+\B U(t)\Bigr]X\,dt.
\end{split}
\end{equation}
By summing up (\ref{eq: DG step}) over all the time-steps and using $U^0_-=u_0$, the DG method can now equivalently be written as: Find $U\in
\W_p$ such that
\begin{equation}\label{eq:DGFEM}
G_N(U,X)=u_0\,X^0_++\int_0^{t_N}f\,X\,dt\quad\forall~~ X\in\W_p.
\end{equation}
Since the solution~$u$ is continuous, it follows that
\[
G_N(u,X)=u_0\,X^0_++\int_0^{t_N}f\,X\,dt\quad\forall~~ X\in\W_p.
\]
Thus, the following Galerkin orthogonality property holds:
\begin{equation}
\label{eq:Gal-orthog} G_N(U-u,X)=0\quad\text{$ \forall~ X\in\W_p$\,.}
\end{equation}
Before stating the regularity property of the solution $u$ of (\ref{eq: VIE}), we display  in the next remark an alternative form of $G_N$ which
will be used in our error analysis.
\begin{remark}
\label{rem:GN-alt} Integration by parts yields the following alternative expression for the bilinear form $G_N$ in~\eqref{eq: GN def}:
\begin{multline*}
G_N(U,X)=U_-^N\,X_-^N-\sum_{n=1}^{N-1}U_-^n\,[X]^n\\
+\sum_{n=1}^N\int_{t_{n-1}}^{t_n}\,\left[-U(t)X'
    +a(t)U(t)X+\B U(t)X\right]\,dt.
\end{multline*}
\end{remark}
Throughout the paper, we assume that the solution $u$ of~(\ref{eq: VIE}) satisfies:
\begin{equation} \label{eq:countable-regularity v1}
|u^{(j)}(t)|\le C\, t^{\sigma-j}\quad{\rm for}\quad 1\le j\le p+1~~{\rm where}~~1\le \sigma \le \alpha+1\,
\end{equation}
where the constant $C$ depends on $j$. For instance, if in (\ref{eq: VIE}) the function $f=t^{\kappa_1}f_1+ t^{\kappa_2} f_2$ for some
$\kappa_1,\,\kappa_2 \ge 0$ and the functions $a,\,b,\,f_1$ and $f_2$ are in ${\rm C}^{j-1}[0,T]$ for $1\le j\le p$, then
(\ref{eq:countable-regularity v1}) holds for $\sigma=1+\min\{\kappa_1,\kappa_2,\alpha\},$ see \cite[Section 7.1]{Brunner04} for more details.

We notice from (\ref{eq:countable-regularity v1}) that $|u^{(j)}(t)|$ is not bounded near $t=0$ for $j\ge 2$. Hence,  to compensate the singular
behavior of $u$ near $t=0$, we employ a family of non-uniform meshes, where the time-steps are concentrated near zero. Thus, we assume that, for
a fixed $\gamma\ge1$,
\begin{equation}\label{eq: kn gamma}
k_n\le C_\gamma kt_n^{1-1/\gamma} \quad\text{and}\quad t_n\le C_\gamma t_{n-1}\quad\text{for $2\le n\le N$,}
\end{equation}
with
\begin{equation}\label{eq: k1 gamma}
c_\gamma k^\gamma\le k_1\le C_\gamma k^\gamma.\end{equation} For instance, one may choose
\begin{equation}\label{eq: standard tn}
t_n=(n/N)^\gamma T\quad\text{for $0\le n\le N$.}
\end{equation}
Under the assumptions (\ref{eq:countable-regularity v1})--(\ref{eq: k1 gamma}), we show in Theorem~\ref{thm: U^n-u(t_n)} that the error
$|U_-^n-u(t_n)|$ is of order $k^{\gamma\sigma+\min\{p,1+\alpha\}}$, for $1\le n\le N$. So, we have a superconvergence of order
$k^{p+1+\min\{p,1+\alpha\}}$ provided $\gamma>(p+1)/\sigma$. However, for a quasi-uniform mesh (i.e., $\gamma=1$) our bound yields a poorer
convergence rate of order~$k^{\sigma+\min\{p,1+\alpha\}}$.
\section{Projection operator and technical lemmas}
\label{sec:time-error} In this section we introduce a projection operator that has been used various times in the analysis of DG time-stepping
methods; see~\cite{Thomee2006}, and state some preliminary results that are needed in our convergence analysis in the forthcoming sections.

For a given function $ u\in C[0,T]$, we define the interpolant ${\Pi}^- u \in \W_p$ by
\begin{equation}
\label{eq:Pi} {\Pi}^- u(t_n^-)=u(t_n)~~\text{and}~~ \int_{t_{n-1}}^{t_n}\, ( u\,-{\Pi}^- u)\,v \,dt=0\quad \text{ $\forall~~v \in
\Poly_{p-1}(I_n)$}
\end{equation}
 and for $1\le n\le N$. From~\cite[Lemma~3.2]{SchoetzauSchwab00} it follows that ${\Pi}^-$
is well-defined.

 To state the approximation properties of ${\Pi}^-$, we  introduce the
 notation
\[
\|\phi\|_{I_n}=\sup_{t\in I_n}|\phi(t)|\quad{\rm for~any}~~\phi \in C(t_{n-1},t_n).
\]
\begin{theorem}\label{thm:eta-approx}
 There exists a
constant $C$, which depends on $p$ such that:
\begin{romenum}
\item For any $0\le q\le p$ and $u|_{I_n}\in H^{q+1}(I_n)$, there
holds
\[
\int_{t_{n-1}}^{t_n}\, |{\Pi}^- u-u|^2\,dt\le Ck_n^{2q+2} \int_{t_{n-1}}^{t_n}\,|u^{(q+1)}|^2\,dt\quad {\rm for}~~ 1\leq n\leq N.
\]
\item For any $0\le q\le p$ and $u|_{I_n}\in H^{q+1}(I_n)\cap C(I_n)$, there
holds
\[
\|{\Pi}^- u-u\|^2_{I_n}\le C k_n^{2q+1} \int_{t_{n-1}}^{t_n}\,|u^{(q+1)}|^2\,dt\quad {\rm for}~~ 1\leq n\leq N.
\]
\end{romenum}
\end{theorem}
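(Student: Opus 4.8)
The plan is to reduce everything to a fixed reference interval by an affine change of variables and then invoke a Bramble--Hilbert (Deny--Lions) argument. First I would map $I_n=(t_{n-1},t_n)$ onto the reference interval $\hat I=(0,1)$ via $t=t_{n-1}+k_n\hat t$, writing $\hat u(\hat t)=u(t)$ for the pulled-back function. The key observation is that the defining conditions \eqref{eq:Pi} are invariant under this map: the endpoint interpolation becomes $\widehat{\Pi^- u}(1)=\hat u(1)$, and the orthogonality against $\Poly_{p-1}(I_n)$ becomes orthogonality against $\Poly_{p-1}(\hat I)$ (the factor $k_n$ coming from $dt=k_n\,d\hat t$ cancels). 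Hence the projection commutes with the affine map, and it suffices to prove the two estimates on $\hat I$; the stated powers of $k_n$ then follow from the scaling relations $\int_{I_n}|\phi|^2\,dt=k_n\int_{\hat I}|\hat\phi|^2\,d\hat t$ and $\int_{\hat I}|\hat u^{(q+1)}|^2\,d\hat t=k_n^{2q+1}\int_{I_n}|u^{(q+1)}|^2\,dt$, together with the scale-invariance of the sup-norm.

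On the reference interval I would establish two properties of $\hat\Pi^-:=\widehat{\Pi^-}$. First, polynomial reproduction: if $w\in\Poly_p$, then $w$ itself satisfies both defining conditions, so by the uniqueness guaranteed in \cite[Lemma~3.2]{SchoetzauSchwab00} we get $\hat\Pi^- w=w$; in particular $\hat\Pi^-$ fixes every $w\in\Poly_q$ for $q\le p$. Second, boundedness: $\hat\Pi^-$ is linear, its image lies in the finite-dimensional space $\Poly_p$, and it is determined by the point value $\hat u(1)$ and the pairings $\int_{\hat I}\hat u\,\hat v\,d\hat t$ for $\hat v$ in a basis of $\Poly_{p-1}$. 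Since $q+1\ge1$, the one-dimensional Sobolev embedding $H^{q+1}(\hat I)\hookrightarrow C(\hat I)$ makes the endpoint evaluation a bounded functional on $H^{q+1}(\hat I)$, while the $L^2$-pairings are trivially bounded there; solving the fixed invertible linear system for the coefficients then shows that $\hat\Pi^-$ is bounded from $H^{q+1}(\hat I)$ into $\Poly_p$ in both the $L^2(\hat I)$ and the $C(\hat I)$ norms, all norms being equivalent on the finite-dimensional image.

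With these two properties in hand, the estimates follow from the standard Bramble--Hilbert argument. For any $w\in\Poly_q$, reproduction gives $(I-\hat\Pi^-)\hat u=(I-\hat\Pi^-)(\hat u-w)$, whence $\|(I-\hat\Pi^-)\hat u\|_X\le \|I-\hat\Pi^-\|_{H^{q+1}\to X}\,\|\hat u-w\|_{H^{q+1}(\hat I)}$, with $X=L^2(\hat I)$ for part (i) and $X=C(\hat I)$ for part (ii). Taking the infimum over $w\in\Poly_q$ and using the Deny--Lions estimate $\inf_{w\in\Poly_q}\|\hat u-w\|_{H^{q+1}(\hat I)}\le C\,|\hat u|_{H^{q+1}(\hat I)}$ yields $\|(I-\hat\Pi^-)\hat u\|_X\le C\,|\hat u|_{H^{q+1}(\hat I)}$. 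Transforming back to $I_n$ with the scaling relations above then recovers the factors $k_n^{2q+2}$ and $k_n^{2q+1}$ in (i) and (ii), respectively.

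I expect the main obstacle to be the careful verification that $\hat\Pi^-$ is bounded from $H^{q+1}(\hat I)$ into $C(\hat I)$, which underlies the sup-norm estimate (ii). This hinges on the endpoint evaluation $\hat u\mapsto\hat u(1)$ being a bounded functional on $H^{q+1}(\hat I)$ (which is exactly why the continuity hypothesis $u|_{I_n}\in C(I_n)$ is imposed in (ii)) and on the equivalence of norms on the finite-dimensional image; once boundedness into $C(\hat I)$ is secured, the Bramble--Hilbert step is identical for both parts.
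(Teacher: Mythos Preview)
Your argument is correct and is precisely the standard route: pull back to the reference interval, verify that $\Pi^-$ commutes with the affine map and reproduces $\Poly_q$, establish boundedness of the reference projector into $L^2(\hat I)$ and $C(\hat I)$, and then apply Bramble--Hilbert/Deny--Lions together with the scaling relations you wrote down. The scaling bookkeeping is right, and the polynomial reproduction follows from the uniqueness of $\Pi^-$ exactly as you say.

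The paper itself does not give a proof of this theorem at all: it simply refers to \cite[Section~3]{SchoetzauSchwab00} and \cite[Chapter~12]{Thomee2006} for part~(i), and to \cite[Theorem~3.9 and Corollary~3.10]{SchoetzauSchwabDGODE} and \cite[Equation~(12.10)]{Thomee2006} for part~(ii). Those references carry out essentially the argument you outline (reference-element scaling plus a polynomial-approximation lemma), so your proposal is not a different approach but rather a self-contained reconstruction of what the cited proofs do. One minor remark: the extra hypothesis $u|_{I_n}\in C(I_n)$ in~(ii) is in fact redundant in one dimension, since $H^{q+1}(I_n)\hookrightarrow C(\bar I_n)$ already for $q\ge0$; your argument uses exactly this embedding, so the continuity assumption plays no independent role.
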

\begin{proof}
For the proof of the first bound, we refer to ~\cite[Section~3]{SchoetzauSchwab00} or \cite[Chapter 12, Page 214]{Thomee2006}. For the second
bound, see ~\cite[Theorem 3.9 and Corollary 3.10]{SchoetzauSchwabDGODE} or \cite[Equation (12.10)]{Thomee2006}\,.
\end{proof}

The following two technical lemmas are needed in our derivation of the  error estimates. The first lemma has been proved in~\cite[Lemma
6.3]{LarssonThomeeWahlbin98}.
\begin{lemma}  \label{lemma: 6.3LTW} If $g \in L_2(0,T)$ and
$\alpha \in (0,1)$ then
\[\int_0^T\left(\int_0^t(t-s)^{\alpha-1}
g(s)\,ds\right)^2dt \le \frac{T^\alpha}{\alpha}\int_0^T(T-t)^{\alpha-1} \int_0^t g^2(s)\,ds\,dt.\]
\end{lemma}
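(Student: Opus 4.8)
The plan is to prove the inequality
\[
\int_0^T\left(\int_0^t(t-s)^{\alpha-1} g(s)\,ds\right)^2dt \le \frac{T^\alpha}{\alpha}\int_0^T(T-t)^{\alpha-1}\int_0^t g^2(s)\,ds\,dt
\]
by applying the Cauchy--Schwarz inequality to the inner convolution integral, after splitting the weight $(t-s)^{\alpha-1}$ symmetrically. First I would write the inner integral as $\int_0^t (t-s)^{(\alpha-1)/2}\,(t-s)^{(\alpha-1)/2} g(s)\,ds$ and apply Cauchy--Schwarz to bound its square by
\[
\left(\int_0^t (t-s)^{\alpha-1}\,ds\right)\left(\int_0^t (t-s)^{\alpha-1} g^2(s)\,ds\right) = \frac{t^\alpha}{\alpha}\int_0^t (t-s)^{\alpha-1} g^2(s)\,ds,
\]
using $\int_0^t (t-s)^{\alpha-1}\,ds = t^\alpha/\alpha$. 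At this stage the factor $t^\alpha/\alpha$ can be bounded by $T^\alpha/\alpha$ since $t\le T$ and $\alpha>0$, pulling the constant out of the outer integral.

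The next step is to integrate the remaining double integral $\int_0^T \int_0^t (t-s)^{\alpha-1} g^2(s)\,ds\,dt$ and recast it in the form demanded by the right-hand side. I would swap the order of integration via Fubini (legitimate because $g\in L_2$ makes $g^2$ integrable and the kernel is nonnegative), turning the region $\{0<s<t<T\}$ into $\{0<s<T,\ s<t<T\}$, giving
\[
\int_0^T g^2(s)\int_s^T (t-s)^{\alpha-1}\,dt\,ds = \int_0^T g^2(s)\,\frac{(T-s)^\alpha}{\alpha}\,ds.
\]
The target right-hand side is $\int_0^T (T-t)^{\alpha-1}\int_0^t g^2(s)\,ds\,dt$; applying Fubini to that expression gives $\int_0^T g^2(s)\int_s^T (T-t)^{\alpha-1}\,dt\,ds = \int_0^T g^2(s)\,\frac{(T-s)^\alpha}{\alpha}\,ds$, which matches exactly. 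Hence the double integral I produced equals precisely the iterated integral appearing on the right, and combining with the constant $T^\alpha/\alpha$ extracted earlier closes the proof.

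The main subtlety I anticipate is the bookkeeping in the region-exchange rather than any deep estimate: one must verify that the two seemingly different iterated integrals (one weighted by $(T-s)^\alpha/\alpha$ after a $(t-s)^{\alpha-1}$ kernel, the other stated with a $(T-t)^{\alpha-1}$ kernel) coincide after Fubini. Both collapse to the same $s$-integral $\int_0^T g^2(s)(T-s)^\alpha/\alpha\,ds$, which is the key coincidence that makes the stated bound tight. I would take care that the Cauchy--Schwarz split is valid for all $\alpha\in(0,1)$ (so that $(\alpha-1)/2$ is a genuine, integrable weight exponent and $\int_0^t (t-s)^{\alpha-1}\,ds$ converges), which it does since $\alpha-1>-1$. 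No regularity beyond $g\in L_2(0,T)$ is needed, and the nonnegativity of the kernel ensures Fubini applies without integrability concerns.
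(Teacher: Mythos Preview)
Your argument is correct: the Cauchy--Schwarz split $(t-s)^{\alpha-1}=(t-s)^{(\alpha-1)/2}(t-s)^{(\alpha-1)/2}$ followed by the Fubini identification of the two iterated integrals is exactly the standard route, and all the integrability checks you note are valid. The paper does not supply its own proof of this lemma but simply cites \cite[Lemma~6.3]{LarssonThomeeWahlbin98}; the proof given there is essentially the same Cauchy--Schwarz and change-of-order computation you outline.
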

The next lemma is the following Gronwall inequality; see \cite[Lemma 6.4]{LarssonThomeeWahlbin98}.
\begin{lemma}\label{lemma: 6.4LTW} Let $\{a_j\}_{j=1}^N$ and
$\{b_j\}_{j=1}^N$ be sequences of non-negative numbers with $0\le b_1\le b_2\le \cdots\le b_N.$ Assume that there exists a constant $K\ge 0$
such that
\[a_n \le b_n+K\sum_{j=1}^n a_j \int_{t_{j-1}}^{t_j}(t_n-t)^{\alpha-1}\,dt
\quad{\rm for}~~1\leq n\leq N~~{\rm and}~~ \alpha\in (0,1). \] Assume further that $\delta=\frac{K\,k^\alpha}{\alpha}<1.$ Then for
$n=1,\cdots,N,$ we have $a_n \le C b_n$ where $ C$ is a constant that solely depends on $K$, $T$, $\alpha$ and $\delta$.
\end{lemma}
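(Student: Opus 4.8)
The plan is to first strip off the singular diagonal contribution and then run a weakly singular Gronwall argument on the remaining strictly sub-diagonal sum. I would begin by isolating the $j=n$ term in the hypothesis. Since $\int_{t_{n-1}}^{t_n}(t_n-t)^{\alpha-1}\,dt=k_n^\alpha/\alpha\le k^\alpha/\alpha$, this term is bounded by $K a_n k^\alpha/\alpha=\delta a_n$, and using $\delta<1$ to move it to the left-hand side gives
\[
a_n\le\frac{b_n}{1-\delta}+\frac{K}{1-\delta}\sum_{j=1}^{n-1}a_j\int_{t_{j-1}}^{t_j}(t_n-t)^{\alpha-1}\,dt,\qquad 1\le n\le N,
\]
so it suffices to treat an inequality whose sum runs only up to $j=n-1$.

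Next I would recast this fully discrete inequality as a continuous one. Introduce the piecewise-constant functions $\bar a$ and $\bar b$ with $\bar a(t)=a_j$ and $\bar b(t)=b_j$ for $t\in I_j$. Then the sum above equals $\int_0^{t_{n-1}}\bar a(s)(t_n-s)^{\alpha-1}\,ds$, and since $\alpha-1<0$ and $t_n\ge t$ for $t\in I_n$ we have $(t_n-s)^{\alpha-1}\le(t-s)^{\alpha-1}$; extending the range of integration and using that the integrand is non-negative yields, for every $t\in I_n$,
\[
\bar a(t)\le\frac{\bar b(t)}{1-\delta}+\frac{K}{1-\delta}\int_0^{t}\bar a(s)(t-s)^{\alpha-1}\,ds.
\]
Because $\{b_j\}$ is non-decreasing, $\bar b$ is non-decreasing, so the classical generalized (weakly singular) Gronwall inequality of Henry gives $\bar a(t)\le C\bar b(t)$ on $[0,T]$ with $C=C(K/(1-\delta),\alpha,T)$; evaluating on $I_n$ returns $a_n\le C b_n$. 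Alternatively one can stay fully discrete and close the bound by induction on $n$.

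The step I expect to be the main obstacle is the Gronwall closure itself, not the bookkeeping above. A one-step induction fails because the total kernel mass $\int_0^{t_n}(t_n-s)^{\alpha-1}\,ds=t_n^\alpha/\alpha\le T^\alpha/\alpha$ need not be small when $T$ is large, so the sum cannot simply be absorbed. The standard remedy is iteration: substituting the inequality into itself $m$ times replaces the kernel $(t-s)^{\alpha-1}$ by its $m$-fold convolution, which by the Beta-integral identity $\int_s^t(t-\tau)^{\alpha-1}(\tau-s)^{\alpha-1}\,d\tau=\frac{\Gamma(\alpha)^2}{\Gamma(2\alpha)}(t-s)^{2\alpha-1}$ is proportional to $(t-s)^{m\alpha-1}$. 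Choosing $m$ with $m\alpha>1$ makes the iterated kernel bounded, after which an ordinary non-singular Gronwall argument closes the estimate and tracks the dependence of $C$ on $K$, $T$, $\alpha$ and $\delta$. In the fully discrete version the only genuine difficulty is proving the discrete analogue of this convolution bound, i.e. controlling $\sum_j\omega_{nj}\omega_{ji}$ (with $\omega_{nj}=\int_{t_{j-1}}^{t_j}(t_n-t)^{\alpha-1}\,dt$) by $C(t_n-t_i)^{2\alpha-1}$ uniformly in the mesh; the continuous reduction sidesteps this, which is why I would prefer it.
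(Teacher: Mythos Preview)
The paper does not supply its own proof of this lemma; it simply quotes the result from \cite[Lemma~6.4]{LarssonThomeeWahlbin98}. So there is nothing in the present paper to compare your argument against.

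Your argument is correct. Stripping off the $j=n$ term using $\int_{t_{n-1}}^{t_n}(t_n-t)^{\alpha-1}\,dt=k_n^\alpha/\alpha\le k^\alpha/\alpha$ and absorbing it via $\delta<1$ is exactly the right first move; this is also where the hypothesis $\delta<1$ enters. The continuous reduction is clean: for $t\in I_n$ and $s\le t_{n-1}$ you have $t_n-s\ge t-s>0$, so $(t_n-s)^{\alpha-1}\le(t-s)^{\alpha-1}$, and enlarging the domain of integration from $[0,t_{n-1}]$ to $[0,t]$ only increases a non-negative integral. At that point Henry's weakly singular Gronwall inequality applies directly, and monotonicity of $\bar b$ lets you pull $\bar b(s)\le\bar b(t)$ out of the resolvent representation, yielding $\bar a(t)\le C\bar b(t)$ with $C$ depending on $K/(1-\delta)$, $\alpha$ and $T$, hence on $K$, $\delta$, $\alpha$, $T$ as claimed. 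Your remarks on why a one-step induction fails and why iterating to exponent $m\alpha>1$ cures this are accurate; the continuous reduction is indeed the economical route because it hands the convolution bookkeeping to the Beta integral rather than to a discrete analogue. This is essentially the standard proof one finds in the cited source.
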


 Throughout the rest of the paper, we shall
always implicitly assume that the {\em maximum step-size $k$ is sufficiently small} so that the condition $\delta <1$ in Lemma~\ref{lemma:
6.4LTW} is satisfied. More precisely, following Lemma  \ref{lemma: bound1 of thetan}, we shall require that
\begin{equation*}
4\,T^\alpha\left(\frac{\mu^*}{\alpha\,\mu_*}\right)^2 k^\alpha <1\,.
\end{equation*}
\section{Error analysis of the dual problem}\label{sec:dual}
This section is devoted to deriving error estimates for the DG method applied to the dual problem of the Volterra integro-differential equation
(\ref{eq: VIE}). The main results of this section (more precisely, Theorem \ref{v2thm: U-Pi u}) play a crucial role in the proof of the
superconvergence
 error estimate in section \ref{sec:nodal-error}.

 Let $z$ be the solution of the dual problem
\begin{equation}\label{eq: homog dual}
-z'+a(t)z(t)+\B^\ast z(t)=0\quad\text{for $0\le t< T$,}\quad
    \text{with $z(T)=z_T$,}
\end{equation}
where $\B^\ast v(t)=\int_t^T\beta(s,t)v(s)\,ds$ ($\B^\ast$ is the dual of the integral operator $\B$)\,.

Since $z$ has no jumps and since
\begin{multline*}
\int_0^T\bigl[-v(t)z'(t)+a(t)v(t)z(t)+\B v(t)\,z(t)\bigr]\,dt
    \\=\int_0^Tv(t)(-z'(t)+a(t)z(t)+\B^\ast z(t))\,dt=0,
\end{multline*}
the alternative expression of $G_N$ given in Remark \ref{rem:GN-alt}
 yields the identity
\begin{equation}\label{v2eq: GN dual}
G_N(v,z)=v_-^Nz_T\quad{\rm forall}~~ v\in C[0,T]\,.
\end{equation}
($C(0,T]$ denotes the space of continuous functions on $[0,T]$). Let $Z\in \W_p$ denote the approximate solution of~\eqref{eq: homog dual} given
by
\begin{equation}\label{eq: DG solution backward}
G_N(V,Z)=V_-^N z_T\quad\text{ $\forall~~V\in \W_p$\,.}
\end{equation}
Hence, the following Galerkin orthogonality property holds:
\begin{equation}
\label{eq:Gal-orthog dual}
 G_N(V,Z-z)=0\quad\text{
$\forall~~V\in\W_p$}\,.
\end{equation}
At this stage, the main aim is to estimate the error $Z-z$ in the $L_2$-norm. First it is good to notice that  \eqref{eq:Gal-orthog dual} is a
discrete backward analogue of \eqref{eq:Gal-orthog}. Since it is more convenient to deal with a discrete forward problem, we introduce the
functions $\tilde z(t)=z(t_N-t)$ and $\tilde Z(t)=Z(t_N-t)$ and then,  \eqref{eq:Gal-orthog dual} can be rewritten as;
\begin{equation}
\label{eq:Gal-orthog dual forward}
 \tilde G_N( \tilde Z-\tilde z,V)=0\quad\text{
$\forall~~V\in\widetilde \W_p$;}
\end{equation}
 where  $\tilde G_N$ is defined  as in \eqref{eq: GN def}
 but with $\tilde a(t):=a(t_N-t)$ in place of $a(t)$ and $\beta(t_N-s,t_N-t)$ in place of $\beta(t,s)$. The finite
 dimensional space $\widetilde \W_p$ is defined as $\W_p$ but on the reverse
 mesh: $0=\tilde t_0<\tilde t_1<\cdots<\tilde t_N$, where $\tilde
 t_i=\tilde t_{i-1}+\tilde k_i$ with $\tilde k_i=k_{N+1-i}.$

 Setting $\zeta={\tilde \Pi}^- \tilde z-\tilde z$ and $\theta=\tilde Z-{\tilde \Pi}^- \tilde
z$ where $\tilde \Pi^-$ is the interpolant operator defined as in (\ref{eq:Pi}), but on the reverse mesh. Then (\ref{eq:Gal-orthog dual
forward}) implies that
\begin{equation}\label{eq: ortho v1}
\tilde G_N(\theta,V)=-\tilde G_N(\zeta,V)
    \quad\text{ $\forall~~
    V\in\W_p$\,.}
\end{equation}
By the construction of the interpolant we have $\zeta(\tilde t^n_-)=0$ for all $n\ge1$ and hence, using the alternative expression for~$G_N$
 given in Remark \ref{rem:GN-alt} and $\int_{\tilde t_{n-1}}^{\tilde t_n}\zeta(t)\,V'(t)\,dt=0$ (by definition of the
operator $\Pi^-$),
\begin{equation}\label{eq: orthogonal of the dual}
\tilde G_N(\zeta,V)=\sum_{n=1}^N\int_{\tilde t_{n-1}}^{\tilde t_n}\bigl[\tilde a(t)\zeta(t) V(t) +\tilde \B \zeta(t) V(t)\bigr]\,dt
\end{equation}
where \[\tilde \B \zeta(t)=\int_0^t \beta(t_N-s,t_N-t) \zeta(s)\,ds\,. \] In the next theorem we estimate the error between $z$ and $Z$.
\begin{theorem}\label{v2thm: U-Pi u} If $z$ is the solution of the
backward VIE~\eqref{eq: homog dual}, and if $Z \in \W_p$ is the approximate solution defined by ~\eqref{eq: DG solution backward}, then
\[ \int_0^{t_N}|z-Z|^2\,dt\le
     C k^{2\alpha+2}|z_T|^2\,\]
     provided that
     \begin{equation}\label{eq: bound1 of thetan}
\int_0^{t_N} |\theta(t)|^2\,dt
 \leq
 C\int_0^{t_N}|\zeta(t)|^2\,dt\,.
\end{equation}
\end{theorem}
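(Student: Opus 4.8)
The plan is to peel off the interpolation error and reduce everything to estimating $\zeta$ on the time-reversed mesh. First I would pass to the reversed variables: the substitution $t\mapsto t_N-t$ gives $\int_0^{t_N}|z-Z|^2\,dt=\int_0^{t_N}|\tilde z-\tilde Z|^2\,dt$, and by the definitions $\zeta=\tilde\Pi^-\tilde z-\tilde z$ and $\theta=\tilde Z-\tilde\Pi^-\tilde z$ we have $\tilde Z-\tilde z=\theta+\zeta$. Hence
\[
\int_0^{t_N}|z-Z|^2\,dt\le 2\int_0^{t_N}|\theta|^2\,dt+2\int_0^{t_N}|\zeta|^2\,dt,
\]
and the assumed bound \eqref{eq: bound1 of thetan} controls the first integral by the second. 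Thus the theorem follows once I show $\int_0^{t_N}|\zeta|^2\,dt\le Ck^{2\alpha+2}|z_T|^2$.

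Next I would record the regularity of the dual solution. Since $z$ depends linearly on $z_T$, it is enough to track that dependence. The backward problem \eqref{eq: homog dual} carries its start-up singularity at $t=T$ --- the mirror image, under $t\mapsto t_N-t$, of the singularity of the forward solution at $t=0$ --- so, arguing exactly as for \eqref{eq: VIE} (cf. \eqref{eq:countable-regularity v1} and \cite{Brunner04}), one has $z\in C^1[0,T]$ with $|z'(t)|\le C|z_T|$ and $|z''(t)|\le C|z_T|(T-t)^{\alpha-1}$; equivalently $|\tilde z'(t)|\le C|z_T|$ and $|\tilde z''(t)|\le C|z_T|\,t^{\alpha-1}$ for $0<t\le t_N$.

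The crux is then to estimate $\int_0^{t_N}|\zeta|^2$ by applying Theorem~\ref{thm:eta-approx}(i) on each reversed subinterval $(\tilde t_{n-1},\tilde t_n)$, and here the decisive structural fact is that, because the forward mesh \eqref{eq: kn gamma} is refined towards $t=0$, the reversed mesh is \emph{coarsest} exactly near $\tilde t=0$ (with $\tilde k_n\le Ck$ and $\tilde t_1=k_N\sim k$ for the graded meshes under consideration), which is precisely where $\tilde z$ is singular; this mismatch is what limits the attainable rate to $k^{2\alpha+2}$. On the intervals $n\ge2$ I would take $q=1$ and factor out the maximal step,
\begin{multline*}
\sum_{n=2}^N\int_{\tilde t_{n-1}}^{\tilde t_n}|\zeta|^2\,dt\le C\sum_{n=2}^N\tilde k_n^4\int_{\tilde t_{n-1}}^{\tilde t_n}|\tilde z''|^2\,dt\\
\le Ck^4\int_{\tilde t_1}^{t_N}|\tilde z''|^2\,dt\le C|z_T|^2k^4\int_{\tilde t_1}^{t_N}t^{2\alpha-2}\,dt,
\end{multline*}
which, since $\tilde t_1\sim k$, is bounded by $C|z_T|^2k^{2\alpha+2}$ (the worst case being $k^4$ for $\alpha\ge 1/2$ and $k^{2\alpha+3}$ for $\alpha<1/2$). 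For the first interval I would split on $\alpha$: for $\alpha>1/2$ one has $\tilde z''\in L_2(0,\tilde t_1)$ and the $q=1$ estimate gives $\int_0^{\tilde t_1}|\zeta|^2\,dt\le C\tilde k_1^4\int_0^{\tilde t_1}|\tilde z''|^2\,dt\le C|z_T|^2k^{2\alpha+3}$, whereas for $\alpha\le 1/2$ one drops to $q=0$ and uses $|\tilde z'|\le C|z_T|$ to get $\int_0^{\tilde t_1}|\zeta|^2\,dt\le C\tilde k_1^2\int_0^{\tilde t_1}|\tilde z'|^2\,dt\le C|z_T|^2k^3\le C|z_T|^2k^{2\alpha+2}$, the last step using $\alpha\le 1/2$. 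Adding the two contributions gives the required bound on $\int_0^{t_N}|\zeta|^2$.

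The hard part will be precisely this near-origin analysis of $\zeta$: the grading of the mesh is anti-aligned with the dual singularity, so the first interval and the summation over the coarse intervals adjacent to it must be handled carefully, with the interpolation order $q$ chosen according to whether $\tilde z''$ is square-integrable at the origin (i.e. whether $\alpha>1/2$). Everything else --- the reduction via \eqref{eq: bound1 of thetan} and the dual regularity --- is routine, and it is the interplay between the coarse reversed steps $\tilde k_n\sim k$ near $\tilde t=0$ and the $t^{\alpha-1}$ blow-up of $\tilde z''$ that produces the exponent $2\alpha+2$.
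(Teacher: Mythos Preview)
Your proposal is correct and follows essentially the same route as the paper: reduce to $\int_0^{t_N}|\zeta|^2\,dt$ via the decomposition $\tilde Z-\tilde z=\theta+\zeta$ and the hypothesis \eqref{eq: bound1 of thetan}, invoke the dual regularity $|\tilde z'(t)|\le C|z_T|$, $|\tilde z''(t)|\le C|z_T|\,t^{\alpha-1}$, and then apply Theorem~\ref{thm:eta-approx}(i) with $q=1$ on the interior intervals and split the first reversed interval according to whether $\alpha>1/2$ (so $\tilde z''\in L_2$ locally) or $\alpha\le 1/2$ (drop to $q=0$). The only cosmetic difference is in the bookkeeping for $n\ge2$: the paper bounds each term by $\tilde k_n^{3+2\alpha}(\tilde k_n/\tilde t_{n-1})^{2-2\alpha}$ and uses that the ratio $\tilde k_n/\tilde t_{n-1}$ is bounded, whereas you factor out $k^4$ and integrate $t^{2\alpha-2}$ over $(\tilde t_1,t_N)$, relying on $\tilde t_1=k_N\sim k$; both give the same exponent and both implicitly use a mild mesh property (bounded consecutive step ratio, resp.\ $k_N\sim k$) that holds for the graded meshes in question.
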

\begin{proof}
From     the decomposition: $\tilde Z-\tilde z=\zeta+\theta$, the triangle inequality, and \eqref{eq: bound1 of thetan}, we have
\begin{equation}  \int_0^{t_N}|z-Z|^2\,dt= \int_0^{t_N}|\tilde z-\tilde Z|^2\,dt \le  C   \int_0^{t_N}|\zeta|^2\,dt.
\label{eq: estimate of dual error}
\end{equation}
Thus, the task reduces to bound the right-hand side of \eqref{eq: estimate of dual error}. Starting from the relation  $\tilde z(t)=z(t_N-t)$
and recalling that $z$ satisfies \eqref{eq: homog dual}, it is clear that $\tilde z$ solves the VIE:
 \[\tilde z'+a(t_N-t)\tilde z(t)+\int_0^t\beta(t_N-s,t_N-t)\tilde z(s)\,ds  = 0~~\text{
for~$0<t<T$,}
\]
with~$\tilde z(0)=z_T$. Hence, an application of (\ref{eq:countable-regularity v1}) for $\sigma=\alpha+1$ with $\tilde z$ in place of $u$ gives
\begin{equation}\label{v2lem: regularity of z}
\begin{aligned}
|\tilde z'(t)|+t^{1-\alpha}|\tilde z''(t)|+t^{2-\alpha}|\tilde z'''(t)|&\le C|z_T|\,.
\end{aligned}
\end{equation}
Now, Theorem \ref{thm:eta-approx} on the reverse mesh (with $\zeta$ in place of ${\Pi}^- u-u$) and \eqref{v2lem: regularity of z} yield
\begin{equation}\label{v2eq: bound of A-1e1}
\begin{aligned}
\sum_{n=2}^N\int_{\tilde t_{n-1}}^{\tilde t_n}|\zeta(t)|^2\,dt &\le C\sum_{n=2}^N \tilde k_n^4\int_{\tilde t_{n-1}}^{\tilde t_n}|\tilde
z''(t)|^2\,dt
\le C\sum_{n=2}^N \tilde k_n^4\int_{\tilde t_{n-1}}^{\tilde t_n}t^{2\alpha-2}|z_T|^2\,dt \\
&\le C|z_T|^2\sum_{n=2}^N \tilde k_n^5\tilde t_{n-1}^{2\alpha-2}= C|z_T|^2\sum_{n=2}^N \tilde k_n^{3+2\alpha}(\tilde k_n/\tilde t_{n-1})^{2-2\alpha}\\
&\le  C|z_T|^2\sum_{n=2}^N \tilde k_n^{3+2\alpha}\le
    Ck^{2\alpha+2}|z_T|^2\,
\end{aligned}
\end{equation}
 and on $(0,\tilde t_1)$, we notice for $1/2 < \alpha \le 1$ that
\begin{equation*}
\int_0^{\tilde t_1}|\zeta(t)|^2\,dt \le C \tilde k_1^4\int_0^{\tilde t_1}|\tilde z''(t)|^2\,dt \le C k_N^4\int_0^{\tilde
t_1}t^{2\alpha-2}|z_T|^2\,dt \le C|z_T|^2 k_N^{3+2\alpha}\,,
\end{equation*}
and  for $0<\alpha\le 1/2$ that
\begin{equation}\label{v2eq: bound of A-1e1 neq1}
\begin{aligned}
\int_0^{\tilde t_1}|\zeta(t)|^2\,dt \le C \tilde k_1^2\int_0^{\tilde t_1}|\tilde z'(t)|^2\,dt &\le C
k_N^2\int_0^{\tilde t_1}|z_T|^2\,dt \\
&\le C|z_T|^2 k_N^{3} \le C|z_T|^2 k_N^{2+2\alpha} \,.
\end{aligned}
\end{equation}
Finally, combine \eqref{eq: estimate of dual error} and  (\ref{v2eq: bound of A-1e1})--\eqref{v2eq: bound of A-1e1 neq1},  we obtain the desired
result.
\end{proof}
In the next lemma we prove the applicability of the assumption \eqref{eq: bound1 of thetan}.
\begin{lemma}\label{lemma: bound1 of thetan}
For $1\leq n\leq N$, we have
\[
\int_0^{\tilde t_n}|\theta(t)|^2\,dt
 \leq
 C\int_0^{\tilde t_n}|\zeta(t)|^2\,dt\,
\]
\end{lemma}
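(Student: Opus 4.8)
The goal is to bound the discrete-error part $\theta = \tilde Z - \tilde\Pi^- \tilde z$ in terms of the interpolation-error part $\zeta = \tilde\Pi^- \tilde z - \tilde z$, cumulatively over $[0,\tilde t_n]$. The natural strategy is an energy argument: test the orthogonality relation $\tilde G_N(\theta, V) = -\tilde G_N(\zeta, V)$ from~\eqref{eq: ortho v1} against the choice $V = \theta$ itself, then extract a coercivity-type lower bound on the left and control the right via the explicit form~\eqref{eq: orthogonal of the dual}, and finally close the estimate with the Gronwall inequality of Lemma~\ref{lemma: 6.4LTW}.

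Let me sketch the steps. First I would take $V = \theta \in \W_p$ in~\eqref{eq: ortho v1}. On the left, $\tilde G_N(\theta,\theta)$ expands (using the definition~\eqref{eq: GN def} on the reverse mesh) into jump terms plus $\sum_n \int_{\tilde t_{n-1}}^{\tilde t_n}[\theta' + \tilde a\,\theta + \tilde\B\theta]\theta\,dt$. The key algebraic identity is that $\theta'\theta = \tfrac{1}{2}(\theta^2)'$, so integrating by parts turns the jump terms and the $\theta'\theta$ contributions into a telescoping sum yielding $\tfrac12 (\theta_-^n)^2 + \tfrac12\sum_{j=1}^{n-1}([\theta]^j)^2 + \tfrac12(\theta_+^0)^2$ — the standard DG coercivity structure. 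Combined with the term $\int \tilde a\,\theta^2 \ge \mu_* \int \theta^2$, this gives a lower bound of the form $\tfrac12(\theta_-^n)^2 + \mu_*\int_0^{\tilde t_n}\theta^2\,dt$, minus the memory term $\sum_j\int \tilde\B\theta\cdot\theta$, which I would move to the right-hand side and bound using the weak singularity estimate of Lemma~\ref{lemma: 6.3LTW} together with the kernel bound $|b|\le\mu^*$ from~\eqref{eq: mu*}. The right-hand side $-\tilde G_N(\zeta,\theta) = -\sum_n\int[\tilde a\,\zeta + \tilde\B\zeta]\theta\,dt$ is handled by Cauchy–Schwarz and Young's inequality to absorb the $\theta$ factors into the coercive $\mu_*\int\theta^2$ term on the left, leaving a weighted $\int\zeta^2$ contribution plus another memory term in $\zeta$ controlled again by Lemma~\ref{lemma: 6.3LTW}.

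After these manipulations, collecting terms should produce an inequality of the shape
\[
\int_0^{\tilde t_n}|\theta|^2\,dt \;\le\; C\int_0^{\tilde t_n}|\zeta|^2\,dt \;+\; K\sum_{j=1}^n\Bigl(\int_0^{\tilde t_j}|\theta|^2\,dt\Bigr)\int_{\tilde t_{j-1}}^{\tilde t_j}(\tilde t_n - t)^{\alpha-1}\,dt,
\]
where $K$ depends on $\mu^*/\mu_*$, $T$ and $\alpha$. Setting $a_n = \int_0^{\tilde t_n}|\theta|^2\,dt$ and $b_n = C\int_0^{\tilde t_n}|\zeta|^2\,dt$ (which is non-decreasing in $n$, as required), this is exactly the hypothesis of Lemma~\ref{lemma: 6.4LTW}, provided $\delta = K k^\alpha/\alpha < 1$ — which is guaranteed by the standing smallness assumption on $k$ (the condition $4T^\alpha(\mu^*/(\alpha\mu_*))^2 k^\alpha < 1$ mentioned in the text explicitly anticipates this constant $K = 2T^\alpha(\mu^*/\mu_*)^2$, up to the bookkeeping). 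Applying the lemma gives $a_n \le C b_n$, which is the claim.

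The main obstacle will be the careful handling of the two Volterra memory terms (the $\tilde\B\theta$ term from the left and the $\tilde\B\zeta$ term from the right), since these are precisely what prevent a direct pointwise coercivity bound and force the Gronwall structure; the delicate point is to arrange the Cauchy–Schwarz/Young splitting so that \emph{all} surviving $\theta$-factors appear either inside the coercive $\mu_*\int\theta^2$ term (to be absorbed) or inside a $\sum_j(\int_0^{\tilde t_j}\theta^2)(\cdots)$ structure with the correct $(\tilde t_n - t)^{\alpha-1}$ weight demanded by Lemma~\ref{lemma: 6.4LTW}, rather than leaving an uncontrolled cross term. Getting the kernel weight into exactly the form $\int_{\tilde t_{j-1}}^{\tilde t_j}(\tilde t_n-t)^{\alpha-1}\,dt$ via Lemma~\ref{lemma: 6.3LTW} applied on the truncated interval $[0,\tilde t_n]$ is where the technical work concentrates.
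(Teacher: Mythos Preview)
Your proposal is correct and follows essentially the same route as the paper's proof: test \eqref{eq: ortho v1}--\eqref{eq: orthogonal of the dual} with $V=\theta$ (truncated to $(0,\tilde t_n)$), use $\theta'\theta=\tfrac12(|\theta|^2)'$ to extract the nonnegative jump-plus-endpoint terms, absorb the $\tilde a\,\zeta\theta$ and $\tilde\B\zeta\cdot\theta$, $\tilde\B\theta\cdot\theta$ cross terms via the geometric--arithmetic mean inequality together with Lemma~\ref{lemma: 6.3LTW}, and then close with the weakly singular Gronwall inequality of Lemma~\ref{lemma: 6.4LTW}. Your identification of the constant $K$ and of the smallness condition $4T^\alpha(\mu^*/(\alpha\mu_*))^2 k^\alpha<1$ matches the paper's bookkeeping exactly.
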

\begin{proof} Choosing $V=\theta$ on $(0,\tilde t_n)$ and zero elsewhere  in~ \eqref{eq: ortho v1} and \eqref{eq: orthogonal of the dual}, then using the
alternative definition of $G_N$ in Remark~\ref{rem:GN-alt} and $\theta'\theta=(d/dt)|\theta|^2/2$, we observe that
\begin{multline*}
|\theta(\tilde t^n_-)|^2+|{\theta}(\tilde t_0^+)|^2+\sum_{j=1}^{n-1}|[\theta]^j|^2+2\int_0^{\tilde t_n}\,\tilde a(t)|\theta(t)|^2\,dt
\\=-2\int_0^{\tilde t_n}\,\Bigl[\tilde a(t) \zeta(t)+\tilde B \zeta(t)
+\tilde \B \theta(t) \Bigr]\theta(t)\,dt.
\end{multline*} So
\[
 \int_0^{\tilde t_n}\,\tilde a(t)|\theta(t)|^2\,dt \leq
 \int_0^{\tilde t_n}\tilde a(t) |\theta(t)||\zeta(t)|\,dt+\int_0^{\tilde t_n}|\tilde \B
\zeta(t)+ \tilde \B \theta(t)|\,|\theta(t)|dt.
\]
We use the geometric-arithmetic mean inequality $|xy|\leq \frac{\varepsilon x^2}{2}+\frac{y^2}{2\varepsilon}$  (valid for any $\varepsilon>0$)
we find that
\begin{multline*}
 \int_0^{\tilde t_n}\tilde a(t)|\theta(t)||\zeta(t)| \,dt\le \sqrt{\mu^*}\int_0^{\tilde t_n}\sqrt{\tilde a(t)}|\theta(t)||\zeta(t)| \,dt\\ \le
\frac{1}{4}\int_0^{\tilde t_n}\tilde a(t)|\theta(t)|^2\,dt+\mu^*\int_0^{\tilde t_n}|\zeta(t)|^2\,dt \end{multline*} and thus
\begin{multline}\label{eq:q1+q2+q3}
 \frac{3}{4}\int_0^{\tilde t_n}\,\tilde a(t)|\theta(t)|^2\,dt \leq
 \mu^*\int_0^{\tilde t_n}|\zeta(t)|^2\,dt+\int_0^{\tilde t_n}|\tilde \B
\zeta(t)+ \tilde \B \theta(t)|\,|\theta(t)|dt.
\end{multline}

 We employ  the Cauchy-Schwarz inequality, again the geometric-arithmetic mean inequality, and Lemma \ref{lemma: 6.3LTW} (with
$T=\tilde t_n$):
\begin{align*}
\int_0^{\tilde t_n}|\B &\zeta(t)\,\theta(t)|dt\le \mu^* \int_0^{\tilde t_n}\int_0^t
(t-s)^{\alpha-1}|\zeta(s)|\,|\theta(t)|\,ds\,dt\\
&\le \frac{\mu^*}{\mu_*}\int_0^{\tilde t_n}\tilde a(t)^{1/2}|\theta(t)|\int_0^t (t-s)^{\alpha-1}\tilde a(s)^{1/2}|\zeta(s)|\,ds\,dt
\\
&\le  \frac{\mu^*}{\mu_*}\left(\int_0^{\tilde t_n}\left(\int_0^t (t-s)^{\alpha-1}\tilde a(s)^{1/2}|\zeta(s)|\,ds\right)^2\,dt\right)^{1/2}
\left(\int_0^{\tilde t_n}\tilde a(t)|\theta(t)|^2\,dt\right)^{1/2}\\
&\le \left(\frac{\mu^*}{\mu_*}\right)^2\int_0^{\tilde t_n}\left(\int_0^t (t-s)^{\alpha-1}\tilde a(s)^{1/2}|\zeta(s)|\,ds\right)^2\,dt+
\frac{1}{4}\int_0^{\tilde t_n}\tilde a(t)|\theta(t)|^2\,dt\\
&\le \frac{\tilde t_n^\alpha}{\alpha}\left(\frac{\mu^*}{\mu_*}\right)^2 \int_0^{\tilde t_n}(\tilde t_n-t)^{\alpha-1}\int_0^t\, \tilde
a(s)|\zeta(s)|^2\,ds\,dt+\frac{1}{4}\int_0^{\tilde t_n}\tilde
a(t)|\theta(t)|^2\,dt\\
&\le \left(\frac{{\tilde t_n}^{\alpha}\mu^*}{\alpha\,\mu_*}\right)^2 \int_0^{\tilde t_n} \tilde a(s)|\zeta(s)|^2\,ds+\frac{1}{4}\int_0^{\tilde
t_n}\tilde a(t)|\theta(t)|^2\,dt\,.
\end{align*}
Similarly, we notice that
\begin{multline*}
\int_0^{\tilde t_n}|\B \theta(t)\,\theta(t)|dt\\
\le\frac{\tilde t_n^\alpha}{\alpha}\left(\frac{\mu^*}{\mu_*}\right)^2 \int_0^{\tilde t_n}(\tilde t_n-t)^{\alpha-1}\int_0^t \tilde
a(s)|\theta(s)|^2\,ds\,dt+\frac{1}{4}\int_0^{\tilde t_n}\tilde a(t)|\theta(t)|^2\,dt.
\end{multline*}
Inserting the above bounds in (\ref{eq:q1+q2+q3}) implies that
\begin{multline*}
\int_0^{\tilde t_n}\tilde a(t)|\theta(t)|^2\,dt \\
\le  C\int_0^{\tilde t_n}|\zeta(t)|^2\,dt
 +4\frac{\tilde t_n^\alpha}{\alpha}\left(\frac{\mu^*}{\mu_*}\right)^2\sum_{j=1}^n
 \int_{\tilde t_{j-1}}^{\tilde t_j}(\tilde t_n-t)^{\alpha-1}\,dt\int_0^{\tilde t_j}
\tilde a(t)|\theta(t)|^2\,dt.
\end{multline*}
Therefore, the desired result now immediately follows after applying of the Gronwall inequality in Lemma \ref{lemma: 6.4LTW} and using the
assumption (\ref{eq: mu*}) on the function $\tilde a$ (instead of $a$)\,.
\end{proof}
\section{Superconvergence results}\label{sec:nodal-error}
 In this section, we study the
nodal error analysis of the DG solution $U$ defined by ({\ref{eq: DG step}) with  $U_-^0=u_0$. We derive error estimate of the DG solution,
giving rise to superconvergence  algebraic rates. Our analysis partially relies on the techniques introduced in~\cite[Chapter 12]{Thomee2006}
for parabolic problems.
\begin{theorem}\label{thm: U^n-u(t_n)}
Let $\alpha \in (0,1)$ in (\ref{eq: VIE1}). Let the solution $u$ of problem~\eqref{eq: VIE}  satisfy the regularity
property~\eqref{eq:countable-regularity v1} and let $U\in \W_p$ be the DG approximate solution defined by ~\eqref{eq: DG step} with $p\ge 1$. In
addition to the mesh assumption \eqref{eq: kn gamma} and \eqref{eq: k1 gamma}, we assume that $k_n\ge k_{n-1}$ for $1\le n\le N.$ Then
\begin{itemize} \item for $p=1$,
\[ \underset{1\le n\le
N}{\max}|U^n_--u(t_n)|\le Ck\times
\begin{cases}
k^{\gamma\sigma},&1\le\gamma\le 2/\sigma\\
k^2,&\gamma\ge 2/\sigma\end{cases}
\]
\item and for $p\ge 2,$ we have  \[ \underset{1\le n\le
N}{\max}|U^n_--u(t_n)|\le C\max\{1,{\rm log}\, n\}k^{\alpha+1}\times
\begin{cases}
k^{\gamma\sigma},&1\le\gamma\le (p+1)/\sigma\\
k^{p+1},&\gamma\ge (p+1)/\sigma\,.\end{cases}
\]\end{itemize}
\end{theorem}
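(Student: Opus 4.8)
The plan is to fix a node $t_n$ and estimate $|U^n_--u(t_n)|$ by a duality argument, running the dual problem~\eqref{eq: homog dual} on $[0,t_n]$ (so that $t_n$ plays the role of the final time) with terminal datum $z_{t_n}={\rm sgn}\bigl(U^n_--u(t_n)\bigr)$, so that $|z_{t_n}|=1$ and $\bigl(U^n_--u(t_n)\bigr)z_{t_n}=|U^n_--u(t_n)|$. Let $G_n$, $Z\in\W_p$ and ${\Pi}^-$ denote the restrictions to $[0,t_n]$ of the objects in \eqref{eq: GN def}, \eqref{eq: DG solution backward} and \eqref{eq:Pi}. First I would prove the exact-dual identity $G_n(w,z)=w^n_-\,z_{t_n}$ for \emph{every} $w$, whether or not it has jumps: writing $G_n$ in the integration-by-parts form of Remark~\ref{rem:GN-alt}, the continuity of $z$ removes all jump contributions, Fubini turns $\int\B w\,z$ into $\int w\,\B^\ast z$, and the dual equation \eqref{eq: homog dual} annihilates the remaining integrand. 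Choosing $w=U-u$ and subtracting the primal orthogonality \eqref{eq:Gal-orthog} tested against $X=Z$ gives the key relation
\[
|U^n_--u(t_n)|=G_n(U-u,z)=G_n(U-u,\,z-Z).
\]

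Next I would peel off the projection. Splitting $U-u=(U-{\Pi}^-u)+({\Pi}^-u-u)=:\phi+\eta$ with $\phi\in\W_p$, the dual orthogonality \eqref{eq:Gal-orthog dual} kills $G_n(\phi,z-Z)$, leaving $|U^n_--u(t_n)|=G_n(\eta,z-Z)$. Since $\eta(t_m^-)=0$ for all $m$ by the definition \eqref{eq:Pi} of ${\Pi}^-$, every boundary and jump term in the Remark~\ref{rem:GN-alt} form of $G_n(\eta,z-Z)$ vanishes, and there remains
\[
G_n(\eta,z-Z)=\sum_{m=1}^n\int_{I_m}\bigl[-\eta\,(z-Z)'+a\,\eta\,(z-Z)+\B\eta\,(z-Z)\bigr]\,dt.
\]
I would then exploit $\int_{I_m}\eta\,v\,dt=0$ for $v\in\Poly_{p-1}$: in the first term $Z'\in\Poly_{p-1}$ is discarded and $z'$ is replaced by $z'-\pi z'$ with $\pi z'$ a best $\Poly_{p-1}$ approximation, while in the third term I would move the operator onto the dual factor, $\int\B\eta\,(z-Z)=\int\eta\,\B^\ast(z-Z)$.

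The $a$-term and the $\B$-term are then both controlled by $C\,\|\eta\|_{L_2(0,t_n)}\,\|z-Z\|_{L_2(0,t_n)}$, the latter using Lemma~\ref{lemma: 6.3LTW} to bound $\B^\ast(z-Z)$; here Theorem~\ref{v2thm: U-Pi u} supplies $\|z-Z\|_{L_2}\le Ck^{\alpha+1}$ and the graded-mesh interpolation estimate (Theorem~\ref{thm:eta-approx}(i) with \eqref{eq:countable-regularity v1} and \eqref{eq: kn gamma}--\eqref{eq: k1 gamma}) supplies a primal factor of order $k^{\min\{p+1,\gamma\sigma\}}$, so these two pieces sit at the superconvergent level $k^{\alpha+1}\,k^{\min\{p+1,\gamma\sigma\}}$. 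The genuinely delicate piece is the first term $\sum_m\int_{I_m}\eta\,(z'-\pi z')\,dt$, which does not see the DG dual error at all but only the interpolation error of $z'$. I would bound it intervalwise by $\|\eta\|_{L_2(I_m)}\,\|z'-\pi z'\|_{L_2(I_m)}$ and insert the sharp weighted estimates $|u^{(p+1)}(t)|\le Ct^{\sigma-p-1}$ from \eqref{eq:countable-regularity v1} together with the dual regularity already used in the proof of Theorem~\ref{v2thm: U-Pi u}, namely $|z''(s)|\le C(t_n-s)^{\alpha-1}$ and $|z'''(s)|\le C(t_n-s)^{\alpha-2}$. For $p=1$ only the mean of $z'$ can be removed, giving the factor $k_m\|z''\|_{L_2(I_m)}$; the resulting weighted sum converges (a Beta-type integral, with no logarithm) and yields the clean but slightly suboptimal rate $k\cdot k^{\min\{2,\gamma\sigma\}}$. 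For $p\ge2$ a linear part of $z'$ may be subtracted, giving $k_m^2\|z'''\|_{L_2(I_m)}$; now $(t_n-s)^{\alpha-2}$ is borderline non-integrable near $s=t_n$, so I would treat the last interval separately (estimating it through $\|z'\|_{L_2(I_n)}$), after which the residual sum produces the extra $\max\{1,\log n\}$ factor multiplying $k^{\alpha+1}\,k^{\min\{p+1,\gamma\sigma\}}$.

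The main obstacle is precisely this last term: balancing the start-up singularity $t^{\sigma-p-1}$ of $u^{(p+1)}$ near $t=0$ against the terminal singularity $(t_n-s)^{\alpha-1}$ (or $(t_n-s)^{\alpha-2}$) of the dual near $s=t_n$, on a mesh that is finest at the origin and coarsest at $t_n$. The monotonicity $k_n\ge k_{n-1}$ and the grading \eqref{eq: kn gamma} are exactly what allow one to extract a full power $k^{\min\{p+1,\gamma\sigma\}}$ from the $u$-side and $k^{\alpha+1}$ (respectively $k^2$ for $p=1$) from the $z$-side while keeping the remaining weighted sum bounded, up to the logarithm when $p\ge2$. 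Taking the maximum over $1\le n\le N$ then gives the stated bounds.
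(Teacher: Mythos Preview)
Your plan is essentially the paper's own proof: the same duality identity \eqref{v2eq: duality trick}, the same reduction to $G_n(\eta,z-Z)$, and the same splitting into the $z'$-piece (your first term, the paper's $\delta_{1N}$) and the $a+\B$-piece (the paper's $\delta_{2N}$), with Theorem~\ref{v2thm: U-Pi u} supplying the factor $k^{\alpha+1}$ in the latter.

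The one technical point that differs and needs care is your choice of an $L_2$--$L_2$ pairing, $\|\eta\|_{L_2(I_m)}\|z'-\pi z'\|_{L_2(I_m)}$, in the $\delta_{1N}$ term. The dual regularity gives only $|z''(s)|\le C(t_n-s)^{\alpha-1}$, so $\|z''\|_{L_2(I_n)}=\infty$ on the terminal interval whenever $\alpha\le\tfrac12$; for $p=1$ you did \emph{not} propose to treat $I_n$ separately, and the ``Beta-type integral'' you invoke does not converge there. The paper sidesteps this by factoring out $\|\eta\|_{J_N}$ in sup norm and estimating the $z$-derivative in $L_1$, namely $\sum_m k_m\int_{I_m}|z''|\,dt\le k\int_0^{t_n}(t_n-t)^{\alpha-1}\,dt$, which is finite for all $\alpha\in(0,1)$; for $p\ge 2$ it similarly uses an explicit piecewise-linear interpolant $\Pi^+z'$ with an integral remainder in terms of $z'''$, handles $I_N$ through $z''$, and bounds $\sum_{m<n} k_m^2\int_{I_m}(t_n-t)^{\alpha-2}\,dt\le Ck^{\alpha+1}\log(t_n/k_n)$, which is where the $\log n$ arises. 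If you keep the $L_2$ framework you must add the same last-interval caveat for $p=1$ (or simply switch to the $L_\infty$--$L_1$ pairing), after which your argument coincides with the paper's.
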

\begin{proof}
From \eqref{eq: DG solution backward}, (\ref{v2eq: GN dual}), (\ref{eq:Gal-orthog}) and (\ref{eq:Gal-orthog dual}) (recall that $\eta={\Pi}^-
u-u$), we observe that
\begin{equation}\label{v2eq: duality trick}
\begin{aligned}
(U_-^N-u(t_N))z_T&=G_N(U,Z)-G_N(u,z)\\
&=G_N(u,Z-z)=G_N(\eta,z-Z).
    \end{aligned}
\end{equation}
The alternative expression for~$G_N$
 given in Remark \ref{rem:GN-alt} and the equality $\eta(t^n_-)=0$  show that
\begin{equation}\label{v2eq: duality trick conseq}
G_N(\eta,z-Z)=\delta_{1N}+\delta_{2N},
\end{equation}
where
\[\delta_{1N}=-\sum_{j=1}^N\int_{t_{n-1}}^{t_n}\eta\,(z-Z)'\,dt\quad{\rm and}\quad \delta_{2N}=\int_0^{t_N}(a(t)\eta(t)+\B
\eta(t))\, (z-Z)(t)\,dt.\] To bound $\delta_{1N}$ and $\delta_{2N}$, we start from the  regularity property (\ref{v2lem: regularity of z}) and
the relation $z(t)=\tilde z(t_N-t)$, and get
\begin{equation}\label{eq: reg of z}
\begin{aligned}
|z'(t)|+(t_N-t)^{1-\alpha}|z''(t)|+(t_N-t)^{2-\alpha}|z'''(t)|&\le C|z_T|\,.
\end{aligned}
\end{equation}
For $p=1,$ the orthogonality property of ${\Pi}^-$ yields
\begin{multline*}
-\delta_{1N}=\sum_{j=1}^N\int_{t_{n-1}}^{t_n}\eta(t)\,z'(t)\,dt =\sum_{j=1}^N\int_{t_{n-1}}^{t_n}\eta(t)\,[z'(t)-z'(t_n)]\,dt
\\=\sum_{j=1}^N\int_{t_{n-1}}^{t_n}
\eta(t)\,\int_t^{t_n} z''(s)\,ds\,dt
\end{multline*}
and hence, with the help of (\ref{eq: reg of z}) we have
\begin{equation}\label{v2eq: bound of A-1e1' p=1}
\begin{aligned}
|\delta_{1N}|&\le
C\|\eta\|_{J_N}\sum_{n=1}^{N}k_n\int_{t_{n-1}}^{t_n}|z''(t)|\,dt \\
    &\le Ck\|\eta\|_{J_N}\int_0^{t_N}(t_N-t)^{\alpha-1}|z_T|\,dt = Ck\|\eta\|_{J_N} |z_T|\,t_N^\alpha/\alpha\,.
\end{aligned}
\end{equation}
For $p\ge 2,$ again the orthogonality property of ${\Pi}^-$ gives
\begin{multline*}
\delta_{1N}=-\sum_{j=1}^N\int_{t_{n-1}}^{t_n}\eta(t)\,z'(t)\,dt =\sum_{j=1}^{N-1}\int_{t_{n-1}}^{t_n}\eta(t)\,[\Pi^+ z'(t)- z'(t)]\,dt
\\+\int_{t_{N-1}}^{t_N}\eta(t)\,\int_t^{t_n}z''(s)\,ds\,dt
\end{multline*} where $\Pi^+ z'$ is the
 discontinuous, piecewise-linear interpolant of $z'$ defined by
\[
\Pi^+ z'(t):=z'(t_{n-1})+\frac{\bar z^n-z'(t_{n-1})}{k_n/2}(t-t_{n-1})
    \quad\text{for $t\in I_n$\,}
\]
with  $\bar z^n:=k_n^{-1}\int_{I_n}z'(t)\,dt$ denote the mean value of~$z'$ over the subinterval~$I_n$.

   Elementary calculations show
that, for $t\in I_n$, the interpolation error has the representation
\begin{equation*}
\begin{aligned}
\Pi^+ z'(t)-z'(t)&=\int^t_{t_{n-1}}(s-t)z'''(s)\,ds+\frac{t-t_{n-1}}{k_n^2}
        \int_{I_n}(t_n-s)^2z'''(s)\,ds,
\end{aligned}
\end{equation*}
and so, by (\ref{eq: reg of z}),
\begin{equation*}
\begin{aligned}
|\delta_{1N}|&\le
C\|\eta\|_{J_N}\left(\sum_{n=1}^{N-1}k_n^2\int_{t_{n-1}}^{t_n}|z'''(t)|\,dt+Ck_N\int_{t_{N-1}}^{t_N}|z''(t)|\,dt\right) \\
        &\le C\|\eta\|_{J_N}|z_T|\left(\sum_{n=1}^{N-1}k_n^2\int_{t_{n-1}}^{t_n}(t_N-t)^{\alpha-2}\,dt +k_N\int_{t_{N-1}}^{t_N}(t_N-t)^{\alpha-1}\,dt \right)
   \\
 &\le     C\|\eta\|_{J_N}|z_T|(k^{\alpha+1}\log(t_N/k_N)+k_N^{\alpha+1})
\end{aligned}
\end{equation*}
where in the last step we used; $k_n\ge k_{n-1}$ for $n \ge 1$,  and
\begin{multline*}
\sum_{n=1}^{N-1}k_n^2\int_{t_{n-1}}^{t_n}(t_N-t)^{\alpha-2}\,dt
\\
\le C\sum_{n=1}^{N-1}k_n^{1+\alpha}\int_{t_{n-1}}^{t_n}(t_N-t)^{-1}\,dt \le Ck^{1+\alpha}\int_0^{t_{N-1}}(t_N-t)^{-1}\,dt.\end{multline*} To
bound $\delta_{2N}$, we use (\ref{eq: mu*}), integrating,  applying the Holder's inequality and then, using Theorem \ref{v2thm: U-Pi u}, we
notice that
\begin{align*}
|\delta_{2N}|&\le \mu^*\|\eta\|_{J_N}\int_0^{t_N}\left(|Z(t)-z(t)|+\int_0^t(t-s)^{\alpha-1}\,ds\,|Z(t)-z(t)|\right)\,dt
\\
&\le \mu^*\|\eta\|_{J_N}(1+t_N^\alpha/\alpha)\int_0^{t_N}|Z(t)-z(t)|\,dt
\\
&\le C\|\eta\|_{J_N}\left(\int_0^{t_N}|Z(t)-z(t)|^2\,dt\right)^{1/2}
 \le
 Ck^{\alpha+1}\|\eta\|_{J_N}|z_T|\,.
\end{align*}
Using Theorem \ref{thm:eta-approx}, the regularity assumption \eqref{eq:countable-regularity v1}, and the mesh assumption \eqref{eq: k1 gamma},
we get
\begin{equation*}
\|\eta\|^2_{I_1}\le Ck_1\int_0^{t_1}|u'(t)|^2\,dt\le Ck_1\int_0^{t_1}t^{2\sigma-2}\,dt
    =  C\frac{t_1^{2\sigma}}{2\sigma-1}\le C
    k^{2\gamma\sigma},
\end{equation*}
and for $n\ge2$, we use \eqref{eq: kn gamma} instead of \eqref{eq: k1 gamma}  and obtain
\begin{align*}
\|\eta\|_{I_n}^2&\le C k_n^{2p+1}
\int_{t_{n-1}}^{t_n}|u^{(p+1)}(t)|^2\,dt\\
&\le
 Ck_n^{2p+1}\int_{t_{n-1}}^{t_n} t^{2\sigma-2p-2}\,dt
    \le C\,k_n^{2p+2} t_n^{2\sigma-2p-2}
    \le C\,k^{2p+2}t_n^{2\sigma-(2p+2)/\gamma}.
\end{align*}
Finally, combine the above estimations from $\delta_{1N}$ and $\delta_{2N}$, and  recalling \eqref{v2eq: duality trick conseq} and \eqref{v2eq:
duality trick}  yield the desired bound for $n=N.$ For the nodal error at any time step $t_{n_0}$ with $1\le n_0\le N$, we follow the above
steps with $n_0$ in place of $N$, which will then complete the proof.
\end{proof}
\section{Super-convergence analysis for smooth kernels}\label{sec:smooth kernel}
In this section, we handle the nodal super-convergence error  analysis of the DG scheme \eqref{eq: DG step}  for problem (\ref{eq: VIE}) when
$\alpha\in   \N_0$ (so the kernel  is smooth). We use a uniform mesh with step-size $k=T/N$ where $k$ is assumed to be  sufficiently small. In
our analysis, we follow the derivations of Sections \ref{sec:dual} and \ref{sec:nodal-error} with some modifications. We assume that the
functions $a$, $b$ and $f$ are sufficiently regular such that the solution $u$ of (\ref{eq: VIE}) satisfies $|u^{(j)}(t)|\le C$ (and
consequently $|\tilde z^{(j)}(t)|\le C|z_T|$) for $1\le j\le p+1$ with $t \in (0,T]$. Thus, from Theorem \ref{thm:eta-approx} we notice that for
$n\ge1$,
\begin{align}\label{eq: bound of eta smooth}
\|\eta\|_{I_n}^2&\le C k^{2p+1} \int_{t_{n-1}}^{t_n}|u^{(p+1)}(t)|^2\,dt \le Ck^{2p+2}.
\end{align}

We start our analysis by deriving the error involved in approximating the solution $z$ of the backward VIE~\eqref{eq: homog dual}.
\begin{theorem}\label{v2thm: U-Pi u smooth} If $z$ is the solution of the
backward VIE~\eqref{eq: homog dual}, and if $Z \in \W_p$ is the approximate solution defined by ~\eqref{eq: DG solution backward}, then
\[ \int_0^{t_N}|z-Z|^2\,dt\le
     C k^{2p+2}|z_T|^2\,.\]
   \end{theorem}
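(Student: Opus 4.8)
The plan is to mirror the proof of Theorem~\ref{v2thm: U-Pi u}, exploiting the fact that for $\alpha\in\N_0$ both the solution $\tilde z$ and the transformed kernel are regular, so that the weakly singular machinery (Lemmas~\ref{lemma: 6.3LTW} and~\ref{lemma: 6.4LTW}) can be replaced by elementary bounds. As before, I would write $\tilde Z-\tilde z=\zeta+\theta$ with $\zeta=\tilde\Pi^-\tilde z-\tilde z$ and $\theta=\tilde Z-\tilde\Pi^-\tilde z$, and reduce matters through the triangle inequality to the two estimates
\[
\int_0^{t_N}|\theta|^2\,dt\le C\int_0^{t_N}|\zeta|^2\,dt
\quad\text{and}\quad
\int_0^{t_N}|\zeta|^2\,dt\le Ck^{2p+2}|z_T|^2 .
\]

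First I would establish the stability estimate for $\theta$, the smooth-kernel analogue of Lemma~\ref{lemma: bound1 of thetan}. Starting from the orthogonality relations \eqref{eq: ortho v1}--\eqref{eq: orthogonal of the dual} and testing with $V=\theta$, the same manipulation as in Lemma~\ref{lemma: bound1 of thetan} produces an energy identity controlled by the three terms involving $\tilde a\,\zeta\,\theta$, $\tilde\B\zeta\,\theta$ and $\tilde\B\theta\,\theta$. The essential simplification is that for $\alpha\in\N_0$ the transformed kernel $\beta(t_N-s,t_N-t)=(t-s)^{\alpha-1}b(t_N-t)$ is bounded on $[0,T]$ by $\mu^*T^{\alpha-1}$, so the Volterra contributions can be handled by the Cauchy--Schwarz inequality against this bounded kernel rather than through Lemma~\ref{lemma: 6.3LTW}; the resulting recursion for $\int_0^{\tilde t_n}\tilde a|\theta|^2\,dt$ then closes by the \emph{classical} (non-fractional) discrete Gronwall inequality, using the lower bound $\tilde a\ge\mu_*$ from \eqref{eq: mu*}. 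This gives the first displayed bound with a constant independent of~$N$.

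The remaining estimate on $\zeta$ is where the smooth regularity pays off and is entirely routine. On the uniform reverse mesh ($\tilde k_n=k$) I would apply Theorem~\ref{thm:eta-approx}(i) with $q=p$ on each subinterval, so that
\[
\int_{\tilde t_{n-1}}^{\tilde t_n}|\zeta|^2\,dt
\le Ck^{2p+2}\int_{\tilde t_{n-1}}^{\tilde t_n}|\tilde z^{(p+1)}(t)|^2\,dt
\le Ck^{2p+2}|z_T|^2\,\tilde k_n ,
\]
where the last step uses the uniform bound $|\tilde z^{(p+1)}|\le C|z_T|$ assumed at the start of this section. Summing over $1\le n\le N$ and noting $\sum_{n=1}^N\tilde k_n=t_N=T$ yields $\int_0^{t_N}|\zeta|^2\,dt\le Ck^{2p+2}|z_T|^2$, and combining with the stability bound for $\theta$ completes the proof. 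I expect the only genuinely delicate point to be verifying that the Gronwall constant in the stability step does not degenerate as $N$ grows; everything else is a direct, and in fact lighter, transcription of the weakly singular argument.
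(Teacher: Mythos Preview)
Your proposal is correct and follows essentially the same route as the paper: decompose $\tilde Z-\tilde z=\zeta+\theta$, prove a smooth-kernel version of Lemma~\ref{lemma: bound1 of thetan} via the classical discrete Gronwall inequality (the paper uses the bound $(t-s)^{\alpha-1}\le t^{\alpha-1}$ rather than your uniform bound $(t-s)^{\alpha-1}\le T^{\alpha-1}$, but this is inessential), and then estimate $\zeta$ by Theorem~\ref{thm:eta-approx}(i) with $q=p$ together with $|\tilde z^{(p+1)}|\le C|z_T|$. No substantive difference.
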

\begin{proof} First, we recall \eqref{eq:q1+q2+q3} (over a uniform mesh)
\begin{equation}\label{eq:q1+q2+q3 smooth}
 \frac{3}{4}\int_0^{ t_n}\,\tilde a(t)|\theta(t)|^2\,dt \leq
 \mu^*\int_0^{ t_n}|\zeta(t)|^2\,dt+\int_0^{t_n}|\tilde \B
\zeta(t)+ \tilde \B \theta(t)|\,|\theta(t)|dt.
\end{equation}
Using the fact that $\alpha-1\ge 0$, and  the Cauchy-Schwarz and  the geometric-arithmetic mean inequalities, we observe
\begin{align*}
\int_0^{ t_n}|\tilde \B &\zeta(t)\,\theta(t)|dt\le \mu^* \int_0^{ t_n}t^{\alpha-1}|\theta(t)|\int_0^t
|\zeta(s)|\,ds\,dt\\
&\le \frac{\mu^*}{\mu_*}\int_0^{ t_n}t^{\alpha-1} \tilde a(t)^{1/2}|\theta(t)|\int_0^t \tilde a(s)^{1/2}|\zeta(s)|\,ds\,dt
\\
&\le \frac{\mu^*}{\mu_*}\int_0^{ t_n}t^{\alpha-\frac{1}{2}}\, \tilde a(t)^{1/2}|\theta(t)|\left(\int_0^t \tilde
a(s)|\zeta(s)|^2\,ds\right)^{1/2}\,dt
\\
&\le \left(\frac{\mu^*}{\mu_*}\right)^2 \int_0^{ t_n}t^{2\alpha-1}\int_0^t\, \tilde a(s)|\zeta(s)|^2\,ds\,dt+\frac{1}{4}\int_0^{ t_n}\tilde
a(t)|\theta(t)|^2\,dt\\
&\le \frac{ t_n^{2\alpha}}{2\alpha}\left(\frac{\mu^*}{\mu_*}\right)^2 \int_0^{ t_n} \tilde a(s)|\zeta(s)|^2\,ds+\frac{1}{4}\int_0^{ t_n}\tilde
a(t)|\theta(t)|^2\,dt\,.
\end{align*}
Similarly, we notice that
\begin{multline*}
\int_0^{ t_n}|\tilde \B \theta(t)\,\theta(t)|dt\\
\le \left(\frac{\mu^*}{\mu_*}\right)^2 \int_0^{ t_n}t^{2\alpha-1}\int_0^t \tilde a(s)|\theta(s)|^2\,ds\,dt+\frac{1}{4}\int_0^{ t_n}\tilde
a(t)|\theta(t)|^2\,dt.
\end{multline*}
Inserting the above bounds in (\ref{eq:q1+q2+q3 smooth}) yields
\begin{multline*}
\int_0^{ t_n}\tilde a(t)|\theta(t)|^2\,dt \\
\le  C\int_0^{ t_n}|\zeta(t)|^2\,dt
 + t_n^\alpha\left(\frac{\mu^*}{\mu_*}\right)^2\sum_{j=1}^n
 \int_{ t_{j-1}}^{ t_j}t^{\alpha-1}\,dt\int_0^{ t_j}
\tilde a(s)|\theta(s)|^2\,ds.
\end{multline*}
Since one can show by induction on $\alpha$ ($\alpha \in \N_0$) that
 \[\int_{ t_{j-1}}^{ t_j}t^{\alpha-1}\,dt=\frac{1}{\alpha}[
t_{j}^{\alpha}- t_{j-1}^{\alpha}]= \frac{k^\alpha}{\alpha}[j^\alpha-(j-1)^\alpha] \le  k^\alpha j^{\alpha-1}\le k,
\] an application  of the standard discrete Gronwall Lemma  gives
\[
\int_0^{t_n}|\theta(t)|^2\,dt
 \leq
 C\int_0^{ t_n}|\zeta(t)|^2\,dt\,\quad{\rm for}~~1\le n\le N.
\]
Hence, (\ref{eq: estimate of dual error}) is valid now and therefore,  we obtain the desired result after noting that
\begin{equation*}
\begin{aligned}
\sum_{n=1}^N\int_{ t_{n-1}}^{ t_n}|\zeta(t)|^2\,dt &\le C\sum_{n=1}^N  k^{2p+2}\int_{t_{n-1}}^{t_n}|\tilde z^{(p+1)}(t)|^2\,dt
\le
    Ck^{2p+2}|z_T|^2\,.
\end{aligned}
\end{equation*}
\end{proof}

In the next theorem we study the nodal error analysis of the DG solution $U$ defined by ({\ref{eq: DG step}) with  $U_-^0=u_0$.
\begin{theorem}\label{thm: U^n-u(t_n) smooth}
Let $\alpha \in \N_0$ in (\ref{eq: VIE1}). Let the solution $u$ of problem~\eqref{eq: VIE} be sufficiently regular  and  let  $U\in \W_p$ be the
DG approximate solution defined by ~\eqref{eq: DG step} with $p\ge 1$. Then we have
\[ \underset{1\le n\le
N}{\max}|U^n_--u(t_n)|\le Ck^{2p+1}\,.
\]
\end{theorem}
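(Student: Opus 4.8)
The plan is to reuse the duality argument of Theorem~\ref{thm: U^n-u(t_n)}, but to replace the weighted bounds~(\ref{eq: reg of z}) by the uniform regularity of the smooth-kernel problem, namely $|z^{(j)}(t)|\le C|z_T|$ for $1\le j\le p+1$. Exactly as in (\ref{v2eq: duality trick})--(\ref{v2eq: duality trick conseq}), I would begin from
\[
(U_-^N-u(t_N))\,z_T=G_N(\eta,z-Z)=\delta_{1N}+\delta_{2N},
\]
where $\eta=\Pi^- u-u$, $\delta_{1N}=-\sum_{n=1}^N\int_{t_{n-1}}^{t_n}\eta\,(z-Z)'\,dt$, and $\delta_{2N}=\int_0^{t_N}(a\,\eta+\B\eta)(z-Z)\,dt$. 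The two inputs that make the smooth case work are already in hand: the interpolation bound $\|\eta\|_{J_N}\le Ck^{p+1}$ from~(\ref{eq: bound of eta smooth}), and the dual $L_2$-estimate $\int_0^{t_N}|z-Z|^2\,dt\le Ck^{2p+2}|z_T|^2$ from Theorem~\ref{v2thm: U-Pi u smooth}.

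Controlling $\delta_{2N}$ is routine. Since $\alpha\ge1$, the kernel factor $(t-s)^{\alpha-1}$ is bounded on $[0,t_N]$, so $|a\,\eta+\B\eta|\le C\|\eta\|_{J_N}$; then Cauchy--Schwarz gives
\[
|\delta_{2N}|\le C\|\eta\|_{J_N}\Bigl(\int_0^{t_N}|z-Z|^2\,dt\Bigr)^{1/2}\le Ck^{2p+2}|z_T|,
\]
which is of higher order than required and so harmless.

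The real work lies in $\delta_{1N}$, and here I would depart from the weakly-singular proof. In Theorem~\ref{thm: U^n-u(t_n)} only a piecewise-linear interpolant $\Pi^+z'$ and derivatives up to $z'''$ were admissible, because $z$ is singular at $t=t_N$; that is what caps the rate at $\alpha+1$ and produces the logarithm. In the smooth case every $z^{(j)}$, $1\le j\le p+1$, is bounded by $C|z_T|$, so I can run the full-order device. On each $I_n$ I use the orthogonality $\int_{I_n}\eta\,v\,dt=0$ for all $v\in\Poly_{p-1}(I_n)$ to subtract the best degree-$(p-1)$ polynomial approximation $\Pi z'$ of $z'$, so that
\[
\int_{I_n}\eta\,z'\,dt=\int_{I_n}\eta\,(z'-\Pi z')\,dt,\qquad \int_{I_n}|z'-\Pi z'|\,dt\le Ck^{p}\int_{I_n}|z^{(p+1)}|\,dt.
\]
Summing over $n$ and using $\int_0^{t_N}|z^{(p+1)}|\,dt\le C|z_T|$ yields $|\delta_{1N}|\le C\|\eta\|_{J_N}\,k^{p}|z_T|\le Ck^{2p+1}|z_T|$.

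Combining the two estimates gives $|(U_-^N-u(t_N))z_T|\le Ck^{2p+1}|z_T|$; dividing by $|z_T|$ (chosen nonzero) proves the bound at $t_N$, and repeating the whole argument with the dual problem posed on $[0,t_{n_0}]$, i.e. with $n_0$ in place of $N$ throughout, gives it at every node. The one genuinely delicate point is the matching of the \emph{degree} of the subtracted polynomial, which is forced to be $p-1$ by the orthogonality of $\Pi^-$, with the approximation \emph{order} $k^p$ it delivers. This is precisely what upgrades the naive linear-interpolant estimate $O(k^{p+3})$ (sufficient only up to $p=2$) to the claimed $O(k^{2p+1})$ for all $p\ge1$, and it is available only because, unlike in the weakly-singular setting, all derivatives of $z$ up to order $p+1$ are now uniformly bounded by $C|z_T|$.
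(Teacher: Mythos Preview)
Your proposal is correct and follows essentially the same route as the paper. The paper also splits $G_N(\eta,z-Z)$ into $\delta_{1N}+\delta_{2N}$, bounds $\delta_{2N}$ via Cauchy--Schwarz and Theorem~\ref{v2thm: U-Pi u smooth}, and for $\delta_{1N}$ exploits the orthogonality $\int_{I_n}\eta\,v\,dt=0$ for $v\in\Poly_{p-1}$ to replace $(z-Z)'$ by $z'-\widehat\Pi z'$ with $\widehat\Pi z'\in\W_{p-1}$; the only cosmetic difference is that the paper takes $\widehat\Pi$ to be the $\Pi^-$-type projector onto $\W_{p-1}$ and invokes Theorem~\ref{thm:eta-approx}, whereas you use a generic best approximation---both give the needed $O(k^p)$ and hence $|\delta_{1N}|\le Ck^{2p+1}|z_T|$.
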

\begin{proof}
We follow  the steps given in the proof of Theorem \ref{thm: U^n-u(t_n)}, however we  use the new bounds of $\delta_{1N}$ and $\delta_{2N}$
derived below. The orthogonality property of ${\Pi}^-$ gives
\[
\delta_{1N}=-\sum_{j=1}^N\int_{t_{n-1}}^{t_n}\eta(t)\,z'(t)\,dt =\sum_{j=1}^{N}\int_{t_{n-1}}^{t_n}\eta(t)\,[\widehat \Pi z'(t)- z'(t)]\,dt
\] where $\widehat \Pi z' \in \W_{p-1}$ is defined by: for $1\le n\le N$,
\[ \widehat {\Pi} z'(t_n^-)=z'(t_n)~~\text{and}~~ \int_{t_{n-1}}^{t_n}\, ( z'\,-\widehat {\Pi} z')\,v \,dt=0\quad \text{ $\forall~v \in
\Poly_{p-2}(I_n)$ }.
\]
 Hence, from Theorem \ref{thm:eta-approx},
 there exists a
constant $C$, which depends on $p$ such that:
\[
\|\widehat {\Pi} z'-z'\|^2_{I_n}\le C k^{2p-1} \int_{t_{n-1}}^{t_n}\,|z^{(p+1)}|^2\,dt\le Ck^{2p}|z_T|^2\quad {\rm for}~~ 1\leq n\leq N
\]
and thus, using (\ref{eq: bound of eta smooth}) we obtain
\[
|\delta_{1N}|\le
Ck\sum_{n=1}^{N} \|\eta\|_{I_n}\|\widehat \Pi z'(t)- z'(t)\|_{I_n}\le  \\
         Ck^{2p+1}|z_T|\,.\]
         For the
bound of $\delta_{2N}$, we use (\ref{eq: mu*}), integrating,  applying the Cauchy-Schwarz inequality and then, using \eqref{eq: bound of eta
smooth} and  Theorem \ref{v2thm: U-Pi u smooth}, we notice that
\begin{align*}
|\delta_{2N}|&\le \mu^*\|\eta\|_{J_N}\int_0^{t_N}\left(|Z(t)-z(t)|+\int_0^t(t-s)^{\alpha-1}\,ds\,|Z(t)-z(t)|\right)\,dt
\\
&\le \mu^*\|\eta\|_{J_N}(1+t_N^\alpha/\alpha)\int_0^{t_N}|Z(t)-z(t)|\,dt
\\
&\le C\|\eta\|_{J_N}\left(\int_0^{t_N}|Z(t)-z(t)|^2\,dt\right)^{1/2}
 \le
 Ck^{2p+2}|z_T|\,.
\end{align*}
\end{proof}
\section{Numerical examples}\label{sec:numerics}
In this section, we present a set of numerical experiments to demonstrate the obtained theoretical error estimates  and also, to justify the
validation of the DG scheme (\ref{eq: DG step}) for a wider class of integro-differential equations.

Throughout, we consider problem ~\eqref{eq: VIE} with  $T=1$, the initial data $u_0=0$ and  $b(t)=1/\Gamma(\alpha)$ (Here, $\Gamma$ denotes the
usual gamma function.). Recall that, $u$ denotes the exact solution of (\ref{eq: VIE}) and $U$ is the DG solution defined by (\ref{eq: DG step})
using $2^i$ ($i \ge 1$) subintervals, that is $N=2^i$\,.
\subsection{Example 1}
 Choosing  the coefficient $a(t)$  and the source term~$f(t)$ such that the solution $u$ of (\ref{eq: VIE}) is given by
\begin{equation}\label{eq: num ex1}
u(t)=t^{\alpha+1}{\rm e}^{-t}.
\end{equation}
For $\alpha \in (0,1)$,  we notice that  near $t=0$, $u''$ is not bounded, however $u$ is smooth away from $t=0.$
 So, we
employ a time mesh of the form~\eqref{eq: standard tn} for various choices of the mesh grading parameter~$\gamma\ge1$ to verify the results of
Theorem \ref{thm: U^n-u(t_n)}.

Since the exact solution~\eqref{eq: num ex1} behaves like~$t^{\alpha+1}$ as $t\to0^+$, we see that the regularity
condition~\eqref{eq:countable-regularity v1} holds for ~$\sigma= \alpha+1$. Thus, from Theorem \ref{thm: U^n-u(t_n)} and by ignoring the
logarithmic factor, we expect
\begin{multline*}
\|U-u\|_{node}:=\max_{1\le n\le N}|U^N_--u(t_n)|\\
=\begin{cases}
 O(k^{\gamma(\alpha+1)+\min\{p,\alpha+1\}})~&{\rm for}~
1\le\gamma<(p+1)/(\alpha+1), \\
O(k^{p+1+\min\{p,\alpha+1\}})~&{\rm for}~\gamma \ge (p+1)/(\alpha+1).\end{cases}\end{multline*}
 \underline{\bf Case 1} Choosing  $a(t)=1$, thus
\begin{equation}\label{eq: data ex1} f(t)=(\alpha+1)t^\alpha {\rm e}^{-t}+t^{2\alpha+1}\sum_{i=0}^\infty (-1)^i \frac{t^i}{i!} \frac{
\Gamma(2+\alpha+i)}{\,\Gamma(2+2\alpha+i)}\,.\end{equation}

To illustrate the theoretical results of Theorem \ref{thm: U^n-u(t_n) smooth}, we choose $\alpha=2$ and so, the memory term and the solution $u$
are smooth. As expected, the numerical results in Table \ref{tab: ||U-u|| nodal 0} demonstrate nodal errors of order
 $O(k^{2p+1})$  for $p=1,2,3$.

 In Tables \ref{tab: ||U-u|| nodal
1}--\ref{tab: ||U-u|| nodal 3} we displayed the nodal error $\|U-u\|_{node}$ over the mesh (\ref{eq: standard tn}) with $N=2^i$ and for
different values of $\gamma$ when $\alpha=0.2$ and $\alpha=0.5$ (So $|u^{(j)}(t)|$ is not bounded near $t=0$ for $j\ge 2.$). Results shown in
these tables confirm that the best convergence rate we can achieve is $O(k^{p+1+\min\{p,\alpha+1\}})$ and thus our theoretical results in
Theorem \ref{thm: U^n-u(t_n)} are sharp in terms of the convergence order. However, it indicates that in practice we can relax the restriction
on the mesh grading exponent~$\gamma$. We conjecture that $\gamma \ge (p+1+\min\{p,\alpha+1\})/(\sigma+\alpha+1)$ suffices to ensure
$O(k^{p+1+\min\{p,\alpha+1\}})$ convergence. More precisely, we observe $O(k^{(\sigma+\alpha+1)\gamma})$-rates  if $1\le \gamma \le
(p+1+\min\{p,\alpha+1\})/(\sigma+\alpha+1)$.}
\begin{table}
\renewcommand{\arraystretch}{1}
\begin{center}
\begin{tabular}{|r|rr|rr|rr|}
\hline  $i$ &\multicolumn{2}{c|}{$p=1$}&\multicolumn{2}{c|}{$p=2$} &\multicolumn{2}{c|}{$p=3$}
\\
\hline
2&  3.953e-05&  2.622&  1.675e-07& 4.934& 1.928e-10& 6.949\\
3&  5.430e-06&  2.864&  5.391e-09& 4.958& 1.537e-12& 6.971\\
4&  7.063e-07&  2.943&  1.712e-10& 4.976& 1.199e-14& 7.002\\
5&  8.991e-08&  2.974&  5.409e-12& 4.985& 1.003e-16& 6.902\\
\hline\end{tabular}\vline\caption {The nodal error  and the convergence rate over a uniform mesh with $N=2^i$ subintervals when $\alpha=2$ in
(\ref{eq: num ex1})--(\ref{eq: data ex1}).} \label{tab: ||U-u|| nodal 0}
\end{center}
\end{table}
\begin{table}
\renewcommand{\arraystretch}{1}
\begin{center}
\begin{tabular}{|r|rr|rr|rr|}
\hline $i$ &\multicolumn{2}{c|}{$\gamma=1$}&\multicolumn{2}{c|}{$\gamma=1.25$} &\multicolumn{2}{c|}{$\gamma=1.4$}
\\
\hline
6& 6.839e-08& 1.886& 3.991e-08& 3.076&4.883e-08& 3.085\\
7& 1.522e-08& 2.168& 4.746e-09& 3.071&5.767e-09& 3.082\\
8& 3.118e-09& 2.286& 5.668e-10& 3.066&6.836e-10& 3.076\\
9& 6.147e-10& 2.342& 6.796e-11& 3.060&8.136e-11& 3.070\\
\hline
\end{tabular}
 \caption {The nodal error
 and the rate of convergence for Case 1  when $\alpha=0.2$ and $p=1$.} \label{tab: ||U-u|| nodal 1}
\end{center}
\end{table}
In Table \ref{tab: ||U-u|| nodal 1}, we have chosen $\alpha=0.2$ in (\ref{eq: num ex1})--(\ref{eq: data ex1}) and the DG solution $U\in \W_1$
(i.e., the approximate solution is a piecewise linear polynomial). An $O(k^{(\sigma+\alpha+1)\gamma})$ (i.e., $O(k^{2.4\gamma})$) convergence
rate has been observed if $1\le \gamma < 3/(\sigma+\alpha+1)$ and $O(k^3)$ if $\gamma \ge 3/(\sigma+\alpha+1)$. In Table \ref{tab: ||U-u|| nodal
2}, we considered $\alpha=0.5$ and  $U\in \W_p$ where $p=2$ or  $3$. An $O(k^{(\sigma+\alpha+1)\gamma})$ convergence rate has been demonstrated
if $1\le \gamma \le (p+2+\alpha)/(\sigma+\alpha+1)$. Finally, we chose $\gamma
> (p+2+\alpha)/(\sigma+\alpha+1)$ in Table \ref{tab: ||U-u|| nodal 3} and we realized that the order of
convergence almost matched the one given in the last column of Table \ref{tab: ||U-u|| nodal 2} where $\gamma = (p+2+\alpha)/(\sigma+\alpha+1)$
(i.e., the order of convergence did not exceed $p+2+\alpha$ for $p\ge 2$ as the theoretical results suggested).
\begin{table}
\renewcommand{\arraystretch}{1}
\begin{center}
\vline \vline
\begin{tabular}{|r|r|rr|rr|rr|}
 \hline & $i$ &\multicolumn{2}{c|}{$\gamma=1$}
&\multicolumn{2}{c|}{$\gamma=4/3$}
&\multicolumn{2}{c|}{$\gamma=(p+2.5)/3$}\\
\hline
     &6& 5.19e-10& 3.08& 8.23e-12& 4.09& 4.43e-12& 4.45\\
$p=2$&7& 6.28e-11& 3.04& 5.01e-13& 4.04& 2.00e-13& 4.46\\
     &8& 7.73e-12& 3.02& 3.10e-14& 4.01& 9.01e-15& 4.47\\
 \hline \hline
     &4& 2.36e-09& 3.13& 1.40e-10& 4.07& 1.80e-11& 5.47\\
$p=3$&5& 2.83e-10& 3.06& 8.60e-12& 4.03& 4.01e-13& 5.48\\
     &6& 3.47e-11& 3.03& 5.33e013& 4.01& 8.90e-15& 5.49\\
\hline
\end{tabular}\vline \vline
\end{center}
 \caption {The nodal error
 and the convergence rate for Case 1  when $\alpha=0.5$ and $p=2\,,3.$} \label{tab: ||U-u|| nodal 2}
\end{table}
\begin{table}
\renewcommand{\arraystretch}{1}
\begin{center}
\vline \vline
\begin{tabular}{|r|r|rr|}
  \hline  & $i$ &Error&  Rate\\
\hline
             &6& 4.6578e-12& 4.4511\\
$p=2$, $\gamma=5/3$        &7& 2.1039e-13& 4.4685\\
 &8& 9.3953e-15& 4.4850\\
 \hline \hline
           &3& 1.1677e-09&  5.4859\\
$p=3$, $\gamma=2$      &4& 2.5328e-11&  5.5269\\
 &5& 5.6022e-13&  5.4986\\
\hline
\end{tabular}\vline \vline
\end{center}
\caption {Nodal errors
 and  convergence rates for Case 1   when $\alpha=0.5$ and $p=2\,,3$.} \label{tab:
||U-u|| nodal 3}
\end{table}\\

\underline{\bf Case 2} Choosing $a(t)=t^\alpha+1$ and so
\begin{equation}\label{eq: data ex1 v1}
f(t)=(\alpha+1)t^\alpha {\rm e}^{-t}+t^{2\alpha+1} {\rm e}^{-t}+t^{2\alpha+1}\sum_{i=0}^\infty (-1)^i \frac{t^i}{i!} \frac{
\Gamma(2+\alpha+i)}{\,\Gamma(2+2\alpha+i)}\,.\end{equation}

 In Tables \ref{tab: ||U-u|| nodal 1 v1} and \ref{tab: ||U-u|| nodal 2 v1}  we displayed the nodal error $\|U-u\|_{node}$
 over the mesh (\ref{eq: standard tn}) with $N=2^i$ and for
different values of $\gamma$. Again, we observe convergence of order  $O(k^{(\sigma+\alpha+1)\gamma})$   if $1\le \gamma <
(p+1+\min\{p,\alpha+1\})/(\sigma+\alpha+1)$ and of order $O(k^{p+1+\min\{p,\alpha+1\}})$   if $\gamma \ge
(p+1+\min\{p,\alpha+1\})/(\sigma+\alpha+1)$ for different polynomial degrees $p$.
\begin{table}
\renewcommand{\arraystretch}{1}
\begin{center}
\begin{tabular}{|r|rr|rr|rr|}
\hline $i$ &\multicolumn{2}{c|}{$\gamma=1$}&\multicolumn{2}{c|}{$\gamma=1.25$} &\multicolumn{2}{c|}{$\gamma=1.4$}
\\
\hline
6& 1.633e-07& 2.305& 9.644e-08& 3.024& 1.233e-07& 3.024\\
7& 3.205e-08& 2.350& 1.184e-08& 3.026& 1.514e-08& 3.026\\
8& 6.220e-09& 2.365& 1.454e-09& 3.026& 1.858e-09& 3.026\\
9& 1.205e-09& 2.367& 1.787e-10& 3.024& 2.284e-10& 3.024\\
\hline
\end{tabular}
 \caption {The nodal error
 and the rate of convergence  for Case 2 when $\alpha=0.2$ and $p=1.$} \label{tab: ||U-u|| nodal 1 v1}
\end{center}
\end{table}
\begin{table}
\renewcommand{\arraystretch}{1}
\begin{center}
\vline \vline
\begin{tabular}{|r|r|rr|rr|rr|}
 \hline & $i$ &\multicolumn{2}{c|}{$\gamma=1$}
&\multicolumn{2}{c|}{$\gamma=4/3$}
&\multicolumn{2}{c|}{$\gamma=(p+2.5)/3$}\\
\hline
     &6& 1.55e-09& 3.01& 2.62e-11& 4.01& 6.94e-12& 4.40\\
$p=2$&7& 1.92e-10& 3.01& 1.63e-12& 4.01& 3.21e-13& 4.43\\
     &8& 2.39e-11& 3.01& 1.02e-13& 4.00& 1.47e-14& 4.45\\
 \hline \hline
     &4& 5.90e-09& 2.62& 3.76e-10& 3.68& 1.63e-11& 5.49\\
$p=3$&5& 8.22e-10& 2.84& 2.60e-11& 3.85& 3.59e-13& 5.50\\
     &6& 1.08e-10& 2.93& 1.71e-12& 3.93& 8.07e-15& 5.47\\
  \hline
\end{tabular}\vline \vline
\end{center}
 \caption {The nodal error
 and the convergence rate for Case 2   when $\alpha=0.5$ and $p=2\,,3.$.} \label{tab: ||U-u|| nodal 2 v1}
\end{table}

\subsection{Example 2}
In this example we demonstrate that the nodal superconvergence results  of Theorem \ref{thm: U^n-u(t_n)} are still valid even if $a(t) \equiv 0$
in (\ref{eq: VIE}) (so the assumption (\ref{eq: mu*}) is not satisfied) with $\alpha \in (0,1)$.

 In this case, (\ref{eq:
VIE}) reduces to the following (scalar evolution or fractional wave equation, see \cite{McLeanMustapha2007, MustaphaMcLean2009}) time-dependent
problem: for $\alpha\in(0,1)$,
\begin{equation}\label{eq: scalar evolution}
u'+\int_0^t \frac{(t-s)^{\alpha-1}}{\Gamma(\alpha)} u(s)\,ds=f(t)
    \quad \text{for $0<t<T$ with $u(0)=0$}\,.
\end{equation}
The piecewise linear ($p=1$) DG method for (\ref{eq: scalar evolution}) had been studied extensively in \cite{MustaphaMcLean2009}. However, for
$p\ge 2,$  the stability and convergence analyses of the DG method for (\ref{eq: scalar evolution}) are more difficult and it will be a topic of
future research.

 Using the Mittag--Leffler function
$E_\mu(x)=\sum_{p=0}^\infty x^p/\Gamma(1+p\mu)$, we may write the exact solution as
\[
u(t)=\int_0^t E_{\alpha+1}(-s^{\alpha+1})f(t-s)\,ds\,.
\]
 Choosing  a source term~$f(t)= (\alpha+1)t^{\alpha}$, we find
that
\begin{equation}\label{eq: num ex1 v1}
u(t)=-\Gamma(\alpha+2)\sum_{p=1}^\infty\frac{(-t)^{(\alpha+1)p}} {\Gamma(1+(\alpha+1)p)} =\Gamma(\alpha+2)
\left(1-E_{\alpha+1}(-t^{\alpha+1})\right).
\end{equation}

Since the exact solution of \eqref{eq: scalar evolution} behaves like~$t^{\alpha+1}$ as $t\to0^+$, we see that the  regularity
conditions~\eqref{eq:countable-regularity v1}  hold for any~$\sigma=\alpha+1$. For $p=1$ (that is, piecewise linear DG method),  the numerical
results shown in Table \ref{tab: nodal errors} demonstrate a nodal superconvergence  rate  of order $O(k^{\gamma(\sigma+\alpha+1)})$ for
$1\le\gamma<3/(\sigma+\alpha+1)$, and of order $O(k^3)$ for $\gamma \ge 3/(\sigma+\alpha+1)$. However, for $p\ge 2$, the numerical results shown
in Table~\ref{tab: ||U-u|| nodal 2 fractional wave} illustrated a nodal error estimates of  order $O(k^{\gamma(p+2+\alpha})$ (that is,
$O(k^{\gamma(\sigma+p+1})$) for $1\le\gamma<(p+2+\alpha)/(\sigma+\alpha+1)$, and almost of order $O(k^{p+2+\alpha})$  for $\gamma \ge
(p+2+\alpha)/(\sigma+\alpha+1)$.
\begin{table}
\renewcommand{\arraystretch}{1}
\begin{center}
\begin{tabular}{|r|rr|rr|rr|rr|}
\hline $i$&\multicolumn{2}{c|}{$\gamma=1$} &\multicolumn{2}{c|}{$\gamma=1.25$}
&\multicolumn{2}{c|}{$\gamma=1.5$} \\
\hline
  6& 9.11e-08& 2.33& 1.70e-08& 2.79& 2.59e-08& 2.76\\
  7& 1.76e-08& 2.37& 2.38e-09& 2.84& 3.67e-09& 2.82\\
  8& 3.37e-09& 2.39& 3.24e-10& 2.87& 5.05e-10& 2.86\\
  9& 6.40e-10& 2.39& 4.34e-11& 2.90& 6.83e-11& 2.89\\
\hline
\end{tabular}
\end{center}
\caption {The nodal error  and the convergence rates for Example 2, when $\alpha=0.2$ and $p=1.$} \label{tab: nodal errors}
\end{table}
\begin{table}
\renewcommand{\arraystretch}{1}
\begin{center}
\vline \vline
\begin{tabular}{|r|r|rr|rr|rr|}
 \hline & $i$ &\multicolumn{2}{c|}{$\gamma=1$}
&\multicolumn{2}{c|}{$\gamma=4/3$}
&\multicolumn{2}{c|}{$\gamma=(p+2.5)/3$}\\
\hline
      &5& 3.94e-09& 3.03& 1.27e-10& 4.32& 1.77e-10& 4.47\\
$p=2$ &6& 4.89e-10& 3.01& 7.89e-12& 4.01& 7.89e-12& 4.48\\
      &7& 6.09e-11& 3.00& 4.92e-13& 4.00& 3.51e-13& 4.49\\
 \hline \hline
      &4& 2.17e-09& 3.00& 1.36e-10& 4.00& 2.23e-11& 5.35\\
$p=3$ &5& 2.72e-10& 3.00& 8.49e-12& 4.00& 5.74e-13& 5.28\\
      &6& 3.40e-11& 3.00& 5.31e013& 4.00& 2.50e-14& 5.12\\
\hline
\end{tabular}\vline \vline
\end{center}
 \caption {The nodal error
 and the convergence rate for Example 2   when $\alpha=0.5$ and $p=2\,,3.$} \label{tab: ||U-u|| nodal 2 fractional wave}
\end{table}
\bibliographystyle{amsplain}

\end{document}